\newtheorem{theorem}{Theorem}[section]
\newtheorem{corollary}[theorem]{Corollary}
\newtheorem{lemma}[theorem]{Lemma}
\newtheorem{proposition}[theorem]{Proposition}
\theoremstyle{definition}
\newtheorem{definition}[theorem]{Definition}
\newtheorem{remark}[theorem]{Remark}
\newcommand{\setword}[2]{%
  \phantomsection
  #1\def\@currentlabel{\unexpanded{#1}}\label{#2}%
}
\newcommand{\hi}{\mbox{$h_{\infty}$}}
\newcommand{\hE}{\mbox{$h_{E}$}}
\newcommand{\hiE}{\mbox{$h_{E}^i$}}
\newcommand{\cU}{\mbox{${\mathcal U}$}}
\newcommand{\cA}{\mbox{${\mathcal A}$}}
\newcommand{\cB}{\mbox{${\mathcal B}$}}
\newcommand{\cV}{\mbox{${\mathcal V}$}}
\newcommand{\wcLs}{\mbox{${\widetilde {\mathcal L^s}}$}}
\newcommand{\wcF}{\mbox{${\widetilde {\mathcal F}}$}}
\newcommand{\cL}{\mbox{${\mathcal L}$}}
\newcommand{\cG}{\mbox{${\mathcal G}$}}
\newcommand{\wT}{\mbox{${\widetilde T}$}}
\newcommand{\wP}{\mbox{${\widetilde P}$}}
\newcommand{\wN}{\mbox{${\widetilde N}$}}
\newcommand{\cF}{\mbox{${\mathcal F}$}}
\newcommand{\cT}{\mbox{${\mathcal T}$}}
\newcommand{\cE}{\mbox{${\mathcal E}$}}
\newcommand{\cP}{\mbox{${\mathcal P}$}}
\newcommand{\mt}{\mbox{${\widetilde M}$}}
\newcommand{\rrrr}{\mbox{${\mathbb R}$}}
\newcommand{\zzzz}{\mbox{${\mathbb Z}$}}
\title[$\rrrr$-covered foliations and transverse pseudo-Anosov flows]{$\rrrr$-covered foliations and transverse pseudo-Anosov flows in atoroidal pieces}
\author[S.R. Fenley]{Sergio R.\ Fenley} 
\thanks{Research partially supported by Simons foundation
grants 2804429 and 637554.}
\address{Florida State University, Tallahassee, FL 32306, USA}
\email{fenley@math.fsu.edu}
\begin{document}
 
 \begin{abstract}
We study the transverse geometric behavior of $2$-dimensional
foliations in $3$-manifolds.
We show that an $\rrrr$-covered transversely orientable
foliation with Gromov hyperbolic leaves in a closed
$3$-manifold admits a regulating,
transverse pseudo-Anosov flow (in the appropriate sense)
in each atoroidal piece of the
manifold. The flow is a blow up of a one prong
pseudo-Anosov flow.
In addition we show that there is a regulating flow
for the whole foliation.
We also determine how deck transformations act on the 
universal circle of the foliation.

\bigskip
\noindent{\bf Keywords:} Foliations, transverse flows,
atoroidal pieces, pseudo-Anosov flows, universal circle,
periodic orbits, group actions on the circle.

\medskip
\noindent {\bf Mathematics Subject Classification 2020: } 
\ Primary: 57R30, 37E10, 37D20, 37C85; 
\ Secondary: 53C12, 37C27, 37D05, 37C86.
\end{abstract}

\maketitle


\section{Introduction}

This article studies the transverse geometric behavior of
$2$-dimensional foliations in $3$-manifolds.
We will assume that the foliation is transversely orientable
so there is a transverse flow. Any such flow can be used to
understand how the geometry of leaves varies transversely $-$
at least locally.
By change in geometry in this article
we mean the following: consider a 
geodesic arc in a leaf of the foliation and use the
chosen transverse flow to push this arc to an arc
in a nearby leaf. Does the length increase or decrease
and by how much? Notice that there are other important
ways to consider changes in transverse geometry:
for example consider
how the spacing between distinct leaves varies, which leads
to holonomy invariant transverse measures.

The obvious first example to analyze is when the foliation
is a fibration, where we consider the case of closed
$3$-manifolds. The foliation is encoded by the monodromy,
which is a homeomorphism of a closed surface. By the
Nielsen-Thurston theory \cite{Th4,Bl-Ca}, the 
monodromy is either homotopically
periodic, reducible or pseudo-Anosov.
Reducible means that there is a simple closed curve and
a power preserves this curve.
Pseudo-Anosov means that the homeomorphism preserves 
a pair of singular, transverse one dimensional foliations,
whose leaves are either contracted by the map (stable)
or expanded (unstable). The singularities are $p$-prong
type with $p \geq 3$. The pseudo-Anosov option
very strongly describes how the geometry of
leaves is changing by some appropriate transverse suspension flow, 
describing the 
directions of contraction and expansion. 
This geometric information was crucial to geometrize such manifolds
\cite{Th1,Th2}.

In this article we study more general foliations.
One general problem is the use of a transverse flow.
In the case of fibrations any transverse flow induces
homemorphisms between leaves, so we can compare how it
affects the geometry. 
Whenever there is non trivial
holonomy of closed curves \cite{Ca-Co}, the transverse
flow cannot even take a closed curve to a closed curve.
This leads to the first adjustment: the transverse change of
geometry is best understood in the universal cover and
for Reebless foliations: then leaves in the
universal cover are simply connected \cite{No}, so any
compact set in a leaf can be pushed by the transverse flow to
nearby leaves. But in general this cannot be
accomplished for entire leaves.
In fact one necessary condition for the transverse flow 
in the universal cover to be a homeomorphism between arbitrary
leaves is
that the foliation is what is called $\rrrr$-covered
\cite{Fe1}: the leaf space of the foliation in the universal
cover is homeomorphic to the reals $\rrrr$.
In this article we will prove results about
$\rrrr$-covered foliations.

Fibrations are $\rrrr$-covered. But even in this situation the 
pseudo-Anosov case is best understood by looking
at the action on the ideal boundary as follows. Restrict to the case that
the fiber is negatively curved which is the generic
case, so one can assume the fiber is a
hyperbolic surface. The universal cover is the hyperbolic 
plane, compactified with an ideal circle \cite{Be-Pe}.
Any lift of the monodromy to the universal cover induces a homeomorphism
of this ideal circle. Thurston \cite{Th4,Bl-Ca}, following ideas of Nielsen,
did a very thorough study of this action on a circle, yielding
(in the non periodic, irreducible case)
invariant geodesic laminations on the surface, which blow down
to the singular foliations associated with the pseudo-Anosov
monodromy.

This program of seeing all ideal circles of leaves
in the universal circle as one single object
can be carried out to a certain extent
for any foliation with hyperbolic leaves: this is the theory
of the universal circle of foliations \cite{Th5,Th6,Ca-Du,Cal2},
which introduces a powerful way to collate all circles at
infinity into a single circle,
called the univeral circle of the foliation. This has
some powerful consequences for the geometry of the foliation
and the manifold \cite{Ca-Du,Cal2}.
The general expectation is that either geometry does not change 
very much transversally $-$ yielding a Seifert fibered space
structure; or there is some region with unbounded distortion yielding
to some pseudo-Anosov behavior in at least part of the manifold.

This strategy has been carried out very successfully when the
foliation is $\rrrr$-covered (again the case of Gromov hyperbolic
leaves is the generic case) \cite{Fe1,Cal1}, and $M$ is
atoroidal. The atoroidal case is the most common one
as the manifold is then atoroidal and hence
hyperbolic by Perelman's results.
In this case it was proved in \cite{Fe1,Cal1}
that (when the foliation is transversely orientable),
there is a pseudo-Anosov flow transverse to the foliation and
regulating. {\em Regulating} means that in the universal cover every
lifted flow line intersects every leaf of the foliation.
One of the important consequences is that this provided a proof
of the weak hyperbolization conjecture: either there is a $\mathbb Z^2$
subgroup or the fundamental group of the manifold is Gromov
hyperbolic \cite{Gr}. This result on Gromov hyperbolicity 
was of course superseded by the full
proof of the geometrization conjecture by Perelman.

What was left unanswered in \cite{Fe1,Cal1} is the question of what
happens in the intermediate case: that is when $M$ is not Seifert fibered
or atoroidal. In particular the JSJ decomposition of $M$ is not trivial.
The purpose of this article is to analyze the transverse geometry
of $\rrrr$-covered foliations in this intermediate situation.
Our main result is the following:

\begin{theorem} (Main theorem) \label{thm.main}
Let $\cF$ be a two dimensional foliation in $M^3$
closed so that $\cF$ is transversely oriented, $\rrrr$-covered, and
has Gromov hyperbolic leaves. Let $P$ be an atoroidal piece of the
JSJ decomposition of the manifold. Then there is a flow $\Phi$ in $P$
which is a blow up of a one prong pseudo-Anosov flow, so that
$\Phi$ is transverse to $\cF$ restricted to $P$ and it is regulating
for $\cF$ restricted to $P$.
The union of the regular periodic orbits of $\Phi$ is
dense in $P$.
\end{theorem}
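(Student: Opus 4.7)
The plan is to adapt the universal circle machinery that handles the fully atoroidal case \cite{Fe1,Cal1} to a single atoroidal piece $P$ of the JSJ decomposition. First I would consider the universal circle $S^1_{univ}$ of \cF \ together with its $\pi_1(M)$-action, and focus on the restriction of this action to $\pi_1(P) \subset \pi_1(M)$ (which is injective by incompressibility of the JSJ tori). Since $P$ is atoroidal with torus boundary, $\pi_1(P)$ is relatively hyperbolic with peripheral $\zzzz^2$ subgroups, and the interior of $P$ is hyperbolic. The aim is to extract from the $\pi_1(P)$-action a pair of transverse, invariant laminations on $S^1_{univ}$ which, after blow-down, produce the desired flow.

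Second, I would pass to the cover $M_P \to M$ with $\pi_1(M_P) = \pi_1(P)$ and look at the lift of \cF. It is still transversely oriented and \rrrr-covered, with leaf space $\rrrr$ on which $\pi_1(P)$ acts. Combined with the circle-bundle structure coming from $S^1_{univ}$, one has a natural \rrrr-bundle and a circle-bundle over $\rrrr$ with a $\pi_1(P)$-action, on which one can set up the analogues of the "markers" and "turning regions" from the universal circle theory. The key step is to analyze the dynamics of peripheral $\zzzz^2$-subgroups on $S^1_{univ}$: one expects each such subgroup to fix exactly two points, behaving parabolically with respect to the relative hyperbolic structure on $\pi_1(P)$, which corresponds combinatorially to the flow being asymptotic to the JSJ tori.

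Third, following the strategy of \cite{Fe1,Cal1}, I would construct the stable and unstable laminations $\Lambda^s, \Lambda^u$ as the closures of pairs of points in $S^1_{univ}$ joined by leaves whose transverse push contracts (respectively expands) under the transverse flow. One then blows down the complementary regions, obtaining branched surfaces carrying \cF $\vert_P$ and a transverse flow $\Phi$ which is a blow-up of a pseudo-Anosov flow on a quotient. Transverse orientability in the \rrrr-covered setting is what allows one-prong singularities. Regulation follows from the fact that the construction forces every lifted flow line in the universal cover of $P$ to meet every lifted leaf, and density of regular periodic orbits is standard for pseudo-Anosov flows.

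The main obstacle I expect is the second step: in the fully atoroidal setting every non-peripheral element acts on $S^1_{univ}$ with hyperbolic-type dynamics, whereas here the peripheral $\zzzz^2$'s require separate analysis, and one must show that their action on the universal circle is compatible with the relative hyperbolic boundary of $\pi_1(P)$ so that the laminations extend continuously across the JSJ tori, giving a flow defined on the closed piece $P$ and not only on its interior. Handling these boundary dynamics, and identifying precisely when the resulting pseudo-Anosov has one-prong singularities, is where the technical heart of the argument will lie.
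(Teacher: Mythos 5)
Your high-level strategy (universal circle, invariant laminations, blow-down to a pseudo-Anosov flow, blow-up back to a transverse flow) matches the paper's, but the proposal has several genuine gaps precisely at the points the paper identifies as the technical heart.

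First, your construction of $\Lambda^s,\Lambda^u$ is circular. You describe them as ``closures of pairs of points in $S^1_{univ}$ joined by leaves whose transverse push contracts (respectively expands) under the transverse flow,'' but there is no transverse flow yet --- the flow is what the theorem produces. The actual mechanism (from \cite{Fe1} and Proposition \ref{prop.laminations} here) is to take distortion parallelepipeds $\cP_i$: three-dimensional sets swept out by ideal quadrilaterals whose ideal points are fixed in $\cU$, with tops $Z_i$ degenerating in one direction and bottoms $X_i$ degenerating in the other. The laminations are limits of deck translates of the $Z_i$ and $X_i$, saturated by the universal circle. Without this concrete tool the rest of the argument has nothing to stand on.

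Second, and most importantly, the crux of the theorem is that these laminations are confined to the interior of the atoroidal piece: no leaf crosses a JSJ torus (Proposition \ref{prop.crossing}) and no leaf equals a boundary torus (Proposition \ref{prop.notori}). Your sketch gestures at relative hyperbolicity of $\pi_1(P)$ and says the laminations should ``extend continuously across the JSJ tori,'' which is not the right target --- one must show exactly the opposite, that they avoid the tori. The paper's proof is via a fixed-point lemma (Lemma \ref{lem.fixedpoint}) for $\zzzz^2$ acting on $\rrrr$, combined with compactness of the tori to bring translates back to a compact set, and then, to exclude a boundary torus being a leaf, a doubling trick: form $N = P \cup P'$, exploit the reflection symmetry $\xi$ of the universal circle of the double, symmetrize the distortion quadrilaterals, and derive a contradiction. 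Your proposed alternative --- passing to the cover $M_P$ with $\pi_1(M_P)=\pi_1(P)$ --- runs into the problem that $M_P$ is non-compact, so Candel's theorem, the compactness arguments for bringing quadrilaterals back to bounded sets, and the closed-manifold universal circle machinery no longer apply; the paper uses the doubling precisely because it produces a closed auxiliary manifold.

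Finally, density of regular periodic orbits is not ``standard'' here, because the collapsed flow $\Phi_*$ in $P_*$ may have one-prong singularities and the stable/unstable leaves of such orbits need not be properly embedded in the universal cover; the paper works around this by performing Fried surgery on the boundary-collapsed orbits to pass to an honest pseudo-Anosov flow $\Phi_2$ on a surgered manifold $P_2$, applies Mosher's transitivity criterion there, and transfers the conclusion back via almost equivalence. That step is essential and is not covered by your sketch.
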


Once this result is proved it is not very hard to obtain the 
following consequences. The first involves the action of
$\pi_1(M)$ in the universal circle.

\begin{corollary}
Let $\cF$ as in the Main theorem and $\gamma$ in $\pi_1(M)$ 
associated with
a periodic orbit of $\Phi$ in the interior of $P$.
Then up to a finite iterate the action of $\gamma$ on
the universal circle of $\cF$ has finitely many fixed
points which are alternatively attracting and repelling.
If the orbit is regular there are exactly $4$ fixed points
up to iterate.
\end{corollary}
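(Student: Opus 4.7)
\bigskip

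\noindent\textbf{Proof proposal.} The plan is to identify the fixed points of $\gamma$ on the universal circle $\cU$ of $\cF$ with the ideal endpoints of the stable and unstable prongs through the lift of the periodic orbit preserved by $\gamma$, and to read off the attracting/repelling dynamics from the pseudo-Anosov behavior transverse to that orbit.

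First I would lift the setup. Let $\alpha \subset \mathrm{int}(P)$ be the periodic orbit of $\Phi$ associated with $\gamma$, and let $\wT$ be a lift of $\alpha$ in the universal cover preserved by $\gamma$. By the Main theorem $\Phi$ is regulating for $\cF$ restricted to $P$, so $\wT$ meets every leaf of $\wcF$ crossing the chosen lift of $P$ in exactly one point; this gives a $\gamma$-equivariant identification of $\wT$ with the corresponding segment of the leaf space of $\wcF$.

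Next I would extract the four prongs. Since $\Phi$ is a blow-up of a one-prong pseudo-Anosov flow and $\alpha$ is regular, its weak stable and weak unstable two-dimensional leaves in the underlying pseudo-Anosov model are cylinders. In the universal cover $\wT$ is therefore flanked by four half-plane prongs $-$ two stable and two unstable $-$ each with boundary $\wT$, each a union of $\Phi$-flow lines, and each preserved by a fixed iterate of $\gamma$ (the iterate absorbs possible orientation flips or prong permutations coming from the blow-up). Because $\Phi$ is transverse to $\cF$, each prong meets every leaf through $\wT$ in a single ray, and Gromov hyperbolicity of the leaves assigns such a ray a well-defined endpoint in the leaf's circle at infinity. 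The continuous, monotone, $\pi_1(M)$-equivariant collapse maps from leafwise circles to $\cU$ furnished by the universal circle construction then send these four ideal endpoints to four points of $\cU$ fixed by our chosen iterate of $\gamma$. The dynamical type is forced by the pseudo-Anosov structure along $\wT$: contraction on the stable side makes the two stable endpoints repellers in $\cU$, expansion on the unstable side makes the two unstable endpoints attractors, and they alternate around $\cU$ because stable and unstable prongs alternate around $\wT$ inside any transverse leaf.

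The step I expect to be the main obstacle is ruling out additional fixed points on $\cU$. Here I would argue that any $\gamma$-fixed point of $\cU$ must project under the collapse maps to a $\gamma$-invariant ideal point in each leaf meeting $\wT$, and then use the transverse hyperbolicity along $\wT$ to show that the limit set of $\gamma$ on each such leafwise circle at infinity is exactly the four ideal endpoints already identified. Continuity and monotonicity of the universal circle maps would then force the list of fixed points to be complete, giving exactly four in the regular case. The same argument applied to a $k$-prong singular periodic orbit would produce $2k$ such fixed points, still alternating attractor/repeller, establishing the general finite alternating picture stated in the corollary.
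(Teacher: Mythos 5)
Your overall strategy matches the paper's: identify the $\gamma$-fixed points of $\cU$ with the ideal endpoints of the stable and unstable prongs of the $\gamma$-invariant lift $\widetilde\alpha$, with unstable endpoints attracting, stable endpoints repelling, alternating around the circle, and $2p$ of them for a $p$-prong orbit (you also correctly note that passing to a power absorbs any prong permutation). That much is right. However, the step you yourself flag as the main obstacle — proving there are no \emph{other} fixed points of $\rho(\gamma)$ on $\cU$, and actually establishing the contraction/expansion on the complementary intervals — is not carried out, and the sketch you offer does not work as stated. For an $\rrrr$-covered foliation the map $S^1(E) \to \cU$ is a homeomorphism for every leaf $E$, not a monotone collapse, so ``continuity and monotonicity of the universal circle maps'' give you nothing toward completeness of the fixed-point list; and asserting that the ``limit set'' of $\gamma$ on a leafwise circle is exactly the prong endpoints is a restatement of what must be proved, not an argument for it. You also speak of ``a $\gamma$-invariant ideal point in each leaf,'' but $\gamma$ does not act on $S^1(E)$ directly — one has to first build a self-map of a single leaf.

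That self-map is precisely what the paper's proof (Proposition \ref{prop.superattracting}) supplies and what your proposal is missing. Using the regulating flow one forms $\hE = \gamma \circ \tau_{E,\gamma^{-1}(E)}$, a self-homeomorphism of the leaf $E$, whose induced boundary map $\hi$ on $S^1(E)$ realizes $\rho(\gamma)$ under the canonical identification $S^1(E)\cong\cU$ (Lemma \ref{lem.same}). Because the flow preserves $\cE^s_b$, $\cE^u_b$ and contracts/expands them exponentially, $\hE$ preserves these singular foliations in $\wP\cap E$, and the iterates $\hiE(\zeta)$ of a regular stable leaf $\zeta$ crossing an unstable prong $r$ of $z = \widetilde\alpha\cap E$ form a nested family of uniform quasigeodesics escaping to the ideal point $b_1$ of $r$ as $i\to+\infty$ and converging to the stable leaf through $z$ (ideal points $a_1,a_2$) as $i\to-\infty$. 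This exhibits $\hi$ as a contraction on the interior of the interval $I\subset S^1(E)$ with endpoints $a_1,a_2$ containing $b_1$, with $b_1$ the unique interior fixed point; repeating in each complementary interval gives exactly the count and the alternating attracting/repelling pattern. Some equivalent dynamical-transfer argument is needed — it is the substance of the proof, not a formality to be waved through.
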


In section \ref{sec.background} we provide the necessary
background on the universal circle of foliations.

We also prove the following, which extends well known
results for $\rrrr$-covered foliations in atoroidal
manifolds \cite{Fe1,Cal1}:

\begin{corollary} Suppose that $\cF$ is a transversely oriented
foliation which is $\rrrr$-covered. Then there is a flow transverse
to $\cF$ which is regulating for $\cF$.
\end{corollary}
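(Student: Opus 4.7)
The plan is to assemble regulating transverse flows from the Main Theorem on atoroidal JSJ pieces with naturally defined flows on Seifert fibered pieces, and then to glue them across the JSJ tori.

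First, decompose $M$ along its JSJ tori $T_1,\dots,T_m$ into atoroidal pieces $P_1,\dots,P_k$ and Seifert fibered pieces $S_1,\dots,S_\ell$. After a preliminary isotopy of $\cF$ (permitted because $\cF$ is taut and $\rrrr$-covered) we may assume each $T_i$ is either a leaf of $\cF$ or is transverse to $\cF$ with $\cF|_{T_i}$ a linear foliation on the torus. On each atoroidal piece $P$, the Main Theorem supplies a flow $\Phi_P$ transverse to $\cF|_P$ and regulating for $\cF|_P$, obtained as a blow up of a one prong pseudo-Anosov flow.

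Next, on each Seifert fibered piece $S$ with fibration $S\to B$, I would produce $\Phi_S$ from the Seifert structure. When the circle fibers are transverse to $\cF|_S$ we take the flow generated by the fibers; regulation follows from $\rrrr$-coveredness, since each lifted fiber is a properly embedded line in $\widetilde M$ and, by $\rrrr$-coveredness plus transverse orientability, must meet every leaf of $\widetilde \cF$. When some fibers are tangent to $\cF|_S$ the leaves of $\cF|_S$ are unions of vertical tori; then any flow projecting to a nonsingular flow on the base orbifold $B$, with standard model adjustments near exceptional fibers, is transverse and regulating. We then glue: on each $T_i$ the flow from each incident piece induces a foliation of $T_i$ by parallel curves transverse to $\cF|_{T_i}$, and since both slopes are determined by the intersection with $\cF|_{T_i}$ alone, they coincide. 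A collar isotopy makes the flows agree, yielding a continuous global flow $\Phi$ on $M$ transverse to $\cF$ by construction.

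The main obstacle is ensuring the regulating property survives this gluing. A lifted orbit of $\Phi$ in $\widetilde M$ either stays in a lift of a single piece, where it meets every leaf by the local regulating property, or it crosses a sequence of lifted JSJ pieces; in the latter case one must show the leaf-space images of these pieces exhaust $\widetilde M/\widetilde \cF\cong\rrrr$. This reduces to a combinatorial analysis of how lifts of JSJ pieces stack in the leaf space, which should follow from the $\rrrr$-covered hypothesis together with the piecewise regulating property; verifying this stacking, and ruling out the possibility that a flow line is trapped between lifts of leaves contained in the boundary tori of some finite collection of lifted pieces, is the step that does the real work.
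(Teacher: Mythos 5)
Your overall strategy—regulating flow per JSJ piece, then glue along the tori—matches the paper's (Theorem 4.3 there). But there are two genuine gaps and one inaccuracy.

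\textbf{Missing case: non-hyperbolic leaves.} The Main Theorem requires Gromov hyperbolic leaves, but the Corollary's hypotheses do not. The paper first disposes of the case in which $\cF$ admits a holonomy invariant transverse measure: if there is a compact leaf, it is a fiber and one uses a suspension flow; otherwise, after blowing down to a minimal foliation, Tischler's theorem shows $M$ fibers over $\mathbb{S}^1$ and $\cF$ is approximated by fibrations, whose suspension flows regulate. Only after excluding invariant measures does Candel's theorem provide the hyperbolic leafwise metric needed for the rest of the argument. Your proposal never makes this case distinction; as written, it would silently invoke the Main Theorem (and the Seifert/Brittenham analysis) in situations where they do not apply, e.g.\ a foliation with a torus leaf. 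This also affects your Seifert-piece discussion: the ``fibers tangent to $\cF|_S$'' alternative you mention forces compact torus leaves, which land you in the holonomy-invariant-measure case, not the case you are analyzing. In the case actually relevant here (no invariant measure, hence hyperbolic leaves), Brittenham's theorem is what rules out a vertical sublamination and forces the Seifert fibration to be transverse.

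\textbf{Slopes on the JSJ tori need not coincide.} You assert that both induced foliations on $T_i$ have slopes ``determined by the intersection with $\cF|_{T_i}$ alone'' and therefore coincide. This is false: transversality to $\cF|_{T_i}$ only constrains the slope to lie in the open arc of directions positively transverse to $\cF|_{T_i}$, not to a single value. The paper addresses exactly this by observing that all three foliations on $T_i$ (the one from $\cF$ and the two flow foliations) are Reebless, hence have well-defined slopes, and the two flow slopes lie in the same positively transverse half; one then isotopes one to the other within that half, preserving transversality and regulation on $T_i$, and interpolates across $T_i\times[0,1]$. A collar isotopy alone does not suffice without this slope-compatibility argument.

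\textbf{The worry you raise at the end is largely dissolved by good position.} After putting the JSJ tori in good position, each lift $\widetilde T$ meets \emph{every} leaf of $\widetilde \cF$ in a single geodesic, and so each lift $\widetilde P$ of a piece also meets every leaf. Consequently ``regulating for $\cF|_P$'' automatically means the lifted flow lines cross all leaves of $\widetilde\cF$, not merely a sub-interval, and the lifts of pieces do not stack into a proper sub-interval of the leaf space. The ``combinatorial analysis of how lifts of JSJ pieces stack'' you were anticipating is therefore much simpler than you feared—but you would still need to state and use the good-position fact to close it, which your write-up does not.

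Fixing these would bring your argument in line with the paper's; as it stands, the missing holonomy-invariant-measure case and the incorrect slope-coincidence claim are genuine gaps.
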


The Main Theorem also helps to understand in general
the action of $\pi_1(M)$
on the universal circle of the foliation $\cF$.

\vskip .1in
Besides their intrinsic interest, the results of this article, 
particularly the Main Theorem has some uses in other situations.
For example in partially hyperbolic dynamics in dimension $3$
the properties of two dimensional foliations are essential
due to the results of Burago-Ivanov \cite{Bu-Iv} who produced
some ``branching foliations" associated with the dynamics.
In many cases these foliations are $\rrrr$-covered and the
change in transverse geometry can give important information.
In particular in \cite{FP2} we use the results of this article
on transverse pseudo-Anosov
flows on atoroidal pieces and group actions on the universal
cover to prove that some partially hyperbolic
diffeomorphisms in dimension $3$ are not dynamically coherent,
amongst other results in \cite{FP2}.

\subsection{Ideas of the proof of the main theorem}

Let $\wcF$ be the foliation lifted to the universal cover $\mt$.
As indicated above the main idea is to use the universal circle
of the foliation. In the case of an $\rrrr$-covered foliation the
universal circle is canonically homeomorphic to the circle 
at infinity of any leaf of $\wcF$.
This is described more carefully in the next section.
To understand the change of geometry one uses geometric shapes
in the leaves of $\wcF$ determined by ideal points in the
leaves. Any pair of geodesics in the hyperbolic plane are isometric and
so are any ideal triangles. To see distortion one has to look
at ideal quadrilaterals in leaves of $\wcF$ determined by
four ideal points in these leaves. Now change the leaves and
move the ideal points according to the universal circle
identifications.
The ideal quadrilaterals change and may become thinner in one
direction or in the opposite direction. In \cite{Fe1} we used this
distortion quadrilaterals to produce a pair two dimensional immersed
laminations transverse to $\cF$ and intersecting leaves
of $\cF$ in geodesics.
These laminations capture some of the 
distortion in the transverse direction.
The atoroidal property was then used heavily to show that
these immersed laminations fill $M$ and they are in fact embedded,
and eventually produce the transverse pseudo-Anosov flow.

We continue this analysis in the toroidal case. It is expected
that the immersed laminations do not fill $M$. The
constructions and proofs in \cite{Fe1} are to a certain
extent specific to the atoroidal case. In particular
Theorem 5.1 of \cite{Fe1} produces what is called a leafwise
geodesic embedded lamination. The passage from an immersed
to an embedded lamination is fundamental in the understanding
of the problem. The process is done by a convex hull procedure.
In the toroidal case the geodesic lamination
obtained by this convex hull process  theoreticallly
can well be a cutting torus in the JSJ decomposition
manifold. We definitely do not want such a lamination (a torus),
as it would not
describe the transversal change of geometry. At this point the
proofs in the atoroidal and non atoroidal case diverge.
In the toroidal case we study in more detail the
actual immersed laminations which are the limits of the distortion
quadrilaterals,  
 we show that components of 
these laminations are contained in
the interior of the atoroidal pieces (appropriately
adjusted). In particular they cannot be any of the tori
of the JSJ decomposition. This is the hardest fact to prove and it
uses a lot the definition of the distortion quadrilaterals.
We first show that the geodesics produced by the limiting process
cannot cross the tori of the JSJ decomposition. Then we
show that the geodesics cannot be contained in the tori either.
The second property is harder to obtain and involves manipulating
the foliation we start with.
After this is done,
We will show that the a priori immersed
laminations are in fact embedded.
We will also show that the two laminations  we obtain are not the
same, they are transverse to each other and they fill an atoroidal
piece $P$. Then there is a blow down process to produce a 
pseudo-Anosov like flow $-$ but there may be one prongs. So we
generalize the notion of a pseudo-Anosov flow to a one prong
pseudo-Anosov flow. Finally, an appropriate blow up of this
flow produces a flow in $P$ which is transverse to $\cF$ in $P$
and regulating. 

Once this is done the construction of the regulating flow for the
whole foliation is not so complicated.

\vskip .1in
\noindent
{\em Acknowledgements:} We thank Rafael Potrie for many
comments and suggestions that helped improve this article.

\section{$\rrrr$-covered foliations with Gromov hyperbolic leaves}
\label{sec.background}

Here we explain the basics about these foliations
and review the information we need about them.
The details are in \cite{Fe1}.
Let $\cF$ be an $\rrrr$-covered foliation
with Gromov hyperbolic leaves. By Candel's theorem
\cite{Cand} there is a metric in $M$ making every leaf of $\cF$
into a hyperbolic surface (notice that
$\rrrr$-covered is not necessary
for Candel's theorem).
Let $\wcF$ be the lifted foliation to $\mt$. Each leaf $F$ of $\wcF$
with its induced Riemannian metric is isometrict to the
hyperbolic plane and is compactified with an ideal circle $S^1(F)$.

First we introduce the ideal annulus $\cA$.
As a set $\cA$ is the union of $S^1(E)$ 
where $E$ are the leaves of $\wcF$.
The topology is as follows: consider a transversal $\tau$ to
$\wcF$. For each point $x$ in $\tau$ with $x$ in $E$ leaf
of $\wcF$, consider the unit tangent
bundle of $E$ at $x$ which is a circle. Each
unit vector determines a geodesic ray in $E$ starting
at $x$ with that direction. This determines an ideal point in $E$, hence
a point in $S^1(E)$. The map between directions
and $S^1(E)$ is a bijection for each $E$.
As $x$ varies in $\tau$ this provides a bijection between the
unit tangent bundle of $\wcF$ restricted to $\tau$ and
the union of the circles at infinity of the leaves intersecting
$\tau$. The union of the unit tangent bundles to $\wcF$ along
$\tau$ has a natural topology coming from the geometry 
of $\mt$. We put a topology in $\cA$ induced by these local bijections.
In \cite{Fe1} we proved that this topology is well defined, and
deck transformations act by homeomorphisms on $\cA$.
Notice that this topology in $\cA$ clearly induces the
natural topology in each $S^1(E)$.
One important property is the following: suppose that 
$\alpha$, $\beta$ are continuous curves in $\cA$ transverse to the foliation 
by circles at infinity of leaves. Suppose that for each 
$E$ in $\wcF$ the intersection of $\alpha, \beta$ with $S^1(E)$,
denoted respectively by $\alpha_E, \beta_E$, are distinct points
in $S^1(E)$. As $E$ varies let $\ell_E$ be the geodesic in $E$
with ideal points $\alpha_E, \beta_E$. Then the geodesics
$\ell_E$ vary continuously in $\mt$ with $E$.

We now describe the universal circle $\cU$ of $\cF$. 
There are two possibilities:

\begin{itemize}
\item
The {\em uniform case} $-$ Here for any two leaves $E, F$ of $\wcF$,
the Hausdorff distance between them (as subsets of $\mt$) is finite
\cite{Th5}.
The bound obviously depends on the pair of leaves.
For any pair of leaves $E, F$ there is a map 
$\tau: E \rightarrow F$ which is a quasi-isometry. This quasi-isometry
is coarsely well defined and induces a homeomorphisms $\tau_{\infty}$
between
$S^1(E)$ and $S^1(F)$. The map $\tau_{\infty}$ is as follows: given
$p$ in $S^1(E)$ there is a unique $q$ in $S^1(F)$ so that if $r$
is a geodesic ray in $E$ with ideal point $p$ and $r'$ is a geodesic
ray in $F$ with ideal point $q$ then $p, p'$ are a finite Hausdorff
distance from each other in $\mt$.
The homeomorphisms between ideal circles satisfy a cocycle property
and they are obviously equivariant under the action of deck
transformations.

\item
The {\em non uniform case} $-$ In particular there are no compact leaves.
In this case $\cF$ has a unique minimal sublamination $\cF'$. The 
complementary regions of $\cF'$ are $I$-bundles over non compact
surfaces. One can then collapse the complementary regions to produce
a new $\rrrr$-covered foliation which is minimal. All the results
proved for this induced minimal foliation pull back to $\cF$.
So when necessary we assume in this case that $\cF$ is minimal.
Under this condition and not uniform then for any leaves $E, F$
of $\wcF$
there is a dense set of directions in $E$ (and in $F$ too) so that
if $r$ is a ray in $E$ with one of these directions, then $r$ is
asymptotic to $F$. In fact it is asymptotic to a geodesic ray in $F$.
This gives a way to identify a dense set of points in $E$ with
a dense set of points in $F$. This extends to a unique homeomorphism
$\tau_{\infty}$
between $S^1(E)$ and $S^1(F)$. Again these homeomorphisms satisfy
a cocycle and are equivariant under deck transformations.
\end{itemize}

The universal circle $\cU$ of $\cF$ is the quotient of $\cA$
by these identifications: that is, $x$ in $S^1(E)$ is identified
with $y$ in $S^1(F)$ if $\tau_{\infty}(x) = y$, where $\tau_{\infty}$
is the map described above.

In both the uniform and non-uniform 
cases the universal circle induces a vertical foliation in 
$\cA$: two points in $\cA$ are in the same leaf of the vertical
foliation if they represent the same point of the universal circle.
This foliation is by continuous curves in $\cA$.

\vskip .1in
We now introduce the ideal quadrilaterals and parallepipeds in
$\mt$. Let $a, b, c, d$ be $4$ distinct points in $\cU$,
which are circularly ordered. Let $J$ be a compact interval
in the leaf space of $\wcF$. For each $F$ leaf of $\wcF$ in
$J$ let $Q_F$ be the ideal quadrilateral in $F$ with
ideal points $a_F, b_F, c_F, d_F$ which are the representatives
of $a, b, c, d$ in $S^1(F)$. 
By the properties of the universal circle, the quadrilaterals
$Q_F$ vary continuously with $F$.
Let $\cP$ be the union of $Q_F$ over all $F$ in $J$. 

In \cite{Fe1} we proved the following: if $M$ is not Seifert
then there is a sequence of parallelepipeds $\cP_i$ with
tops $Z_i$ and bottoms $X_i$ so that $Z_i$ is very thin in
one direction and $X_i$ is very thin in the opposite direction.
The thinness of a quadrilateral is the minimum distance between
opposites sides of the quadrilateral. An ideal quadrilateral is
regular if both such minimum distances are equal.
See Figure \ref{figure1} for a depiction of such a parallelepiped.

\begin{figure}[ht]
\begin{center}
\includegraphics[scale=1.20]{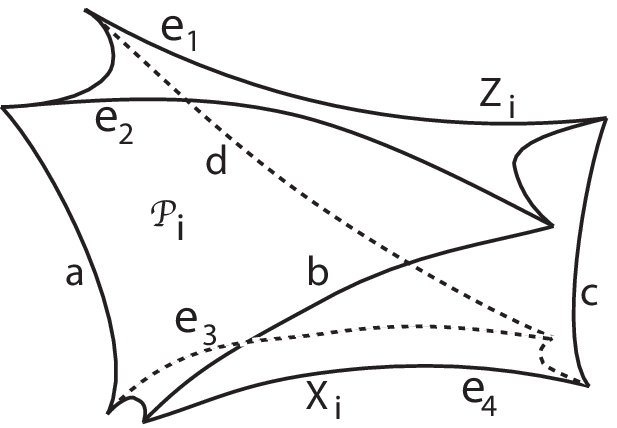}
\begin{picture}(0,0)
%
%
\end{picture}
\end{center}
\vspace{0.0cm}
\caption{A parallelepiped $\cP_i$: this is a $3$-dimensional
set in $\mt$ made up of ideal quadrilaterals in an interval of 
leaves of $\wcF$. The top quadrilateral is $Z_i$ and the
bottom one is $X_i$. The curves $a, b, c, d$ in the figure
are not made up of points in $\mt$, rather they are curves
of ideal points of leaves of $\wcF$. Each of these curves
denotes ideal points of different leaves, but corresponding
to the same point in the universal circle. On the top
the ideal quadrilateral $Z_i$ is thin in one direction: the
geodesic sides $e_1, e_2$ are very close to each other
in the respective leaf of $\wcF$. In the bottom, the 
ideal quadrilateral $X_i$ is thin in the opposite direction:
the geodesics $e_3, e_4$ are now close to each other in
the respective leaf of $\wcF$.}
\label{figure1}
\end{figure}

In \cite{Fe1} we construct these so that thinness of $Z_i$ converges
to $0$ in one direction while thinness of $X_i$ converges to
$0$ in the other direction.
These parallelepipeds are measuring the distortion of the
geometry transversally to $\wcF$. Pick a height $F$ where
$Q_F$ is a regular quadrilateral. Going up to the top $Z_i$ 
makes the quadrilateral very thin $-$ in other words stretching
the leaf in the direction of the side of the quadrilateral
which are very near and contracting the other  direction.
Going down the opposite happens.

Since the thinness of $Z_i$ converges to $0$ they are getting closer
and closer to geodesics. Project to $M$ and consider such a limit
geodesic $\ell_0$. Lift to a geodesic $\ell$ in a leaf $F$ of $\wcF$.
Now saturate $\ell$ by the universal circle, that is take all geodesics
in $E$ leaf of $\wcF$ so that ideal points correspond to the same
points in the universal circle $\cU$ as the ideal points of $\ell$.
The union of all of these is a closed set in $\mt$. The projection
is the a priori immersed lamination $\cL^s$, with lift $\wcLs$.
Each leaf $L$ of $\cL^s$ intersects the leaves of $\wcF$ in geodesics.
We call $\cL^s$ a {\em leafwise geodesic lamination}.
In the same way considering limits of the 
bottoms of the parallelepipeds produces the
immersed lamination $\cL^u$.

We state these results formally for future reference:
Given a geodesic $\ell$ in a leaf $F$ of $\wcF$ the 
{\em saturation} of $\ell$ is the union over all $E$ leaves
of $\wcF$ of the geodesic $\ell_E$ in $E$, so that the
ideal points of $\ell_E$ in $S^1(E)$ and the ideal points
of $\ell$ in $S^1(F)$ are the same points
under the universal circle identification.
We also call this the saturation of $\ell$ by the universal
circle. In \cite{Fe1} it is proved that this is a properly
embedded plane in $\mt$. We call it a {\em wall}.

\begin{proposition} \label{prop.laminations}
Let $\cP_i$ be a sequence of parallelepipeds in $\mt$ with tops
$Z_i$
very thin in one direction and bottoms $X_i$ very thin in the other
direction. Let $\cL^s$ be the leafwise geodesic lamination
obtained as follows: consider all limits of deck translates
of $Z_i$. These form a collection of geodesics
 in leaves of $\wcF$. Saturate these geodesics by the
universal circle.  
$\cL^s$ is the projection
of this collection of saturated walls
to $M$. Let $\cL^u$ be the immersed geodesic lamination
obtained by doing the same procedure with the bottoms $X_i$.
\end{proposition}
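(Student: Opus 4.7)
The plan is to unpack the construction as a three-step procedure and show each step produces the appropriate object, culminating in an immersed leafwise geodesic lamination.

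First I would produce the limit geodesics. The quadrilateral $Z_i$ has two opposite geodesic sides $e_1^i,e_2^i$ whose minimum distance tends to $0$. Pick deck transformations $\gamma_i$ so that $\gamma_i Z_i$ meets a fixed compact lift of a fundamental domain. By compactness of the lifted foliation on compact sets together with uniform bounds on leaves given by Candel's metric, we can pass to a subsequence so that $\gamma_i Z_i$ converges in the Chabauty topology to a closed subset of some leaf $F_0$ of $\wcF$; since the thin direction of $\gamma_i Z_i$ shrinks to $0$, this limit is a single geodesic $\ell_0\subset F_0$. Collecting all limits obtained in this way over all choices of $\gamma_i$ and subsequences gives a collection $\cG^s$ of geodesics in leaves of $\wcF$. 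By a diagonal argument $\cG^s$ is closed, and it is clearly invariant under deck transformations.

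Second, I would saturate each $\ell\in\cG^s$ by the universal circle. If $\ell\subset F$ has ideal points $p,q\in S^1(F)$, let $P,Q\in\cU$ be their images, and for every leaf $E$ of $\wcF$ let $\ell_E\subset E$ be the geodesic with ideal points $P_E,Q_E$ in $S^1(E)$. The continuity statement recalled for the ideal annulus $\cA$ (if two transverse curves in $\cA$ pick out distinct ideal points fiberwise, the associated geodesics vary continuously in $\mt$) shows that $W(\ell):=\bigcup_E\ell_E$ is a properly embedded plane, i.e.\ a wall, exactly as in \cite{Fe1}. Taking $\wcLs:=\bigcup_{\ell\in\cG^s}W(\ell)$, the same continuity plus closedness of $\cG^s$ yield that $\wcLs$ is a closed subset of $\mt$; deck-invariance descends to a closed set $\cL^s\subset M$.

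Third, I would verify the lamination structure. The key point is that two walls $W(\ell),W(\ell')$ either coincide or are disjoint: if they shared a point in some leaf $E$ then the geodesics in $E$ would be equal or cross, and crossing propagates across all leaves by continuity of endpoints on $\cU$, contradicting the fact that crossing is preserved under the universal circle identification. A transversal to $\wcF$ through a wall meets $\wcLs$ in a set parametrized by the closed collection of ideal point pairs coming from $\cG^s$, giving the transverse local product structure that defines an (immersed) lamination. Each leaf of $\cL^s$ meets leaves of $\cF$ in geodesics by construction. The construction of $\cL^u$ is identical with $X_i$ in place of $Z_i$.

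The main obstacle I expect is not any individual step but keeping the limiting and saturation processes compatible: one must verify that the collection $\cG^s$ of limit geodesics is stable under the universal circle saturation, so that saturating and then taking limits agrees with taking limits and then saturating. This is what makes the resulting $\wcLs$ simultaneously closed, deck-invariant, and a disjoint union of walls. The workhorse is the continuity of geodesics with continuously varying ideal endpoints in $\cA$, recalled in this section; everything else is bookkeeping combined with the standard properties of the universal circle identifications.
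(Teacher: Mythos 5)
Your first two steps capture the correct mechanism: translating the thin tops $Z_i$ back to a fixed fundamental domain, extracting a geodesic in the Chabauty limit, and then saturating each such geodesic through the universal circle to produce a wall (a properly embedded plane, as established in \cite{Fe1}). Note that the paper is not proving this proposition from scratch; it is formalizing for future reference the output of the construction in \cite{Fe1}, which is where the detailed estimates live. Your compatibility remark at the end (that taking limits and saturating must commute) is the right thing to flag; that is the technical core of the closedness argument.

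Your third step, however, contains a genuine error. You assert that two walls $W(\ell), W(\ell')$ either coincide or are disjoint, arguing that a transverse intersection would ``propagate across all leaves by continuity of endpoints on $\cU$, contradicting the fact that crossing is preserved under the universal circle identification.'' But propagation of crossing across leaves \emph{is} preservation of crossing under the universal-circle identification: these are the same statement, so there is no contradiction, and nothing in your argument prevents two walls from crossing in every leaf. In fact the walls produced at this stage can cross: the sentence immediately after the proposition reads ``Notice that since different walls may intersect, we keep track of the leaves of $\cL^s$ and not just the set.'' The proposition claims only an \emph{immersed} leafwise geodesic lamination, precisely because embeddedness is not yet available. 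Embeddedness is established much later (the unnumbered proposition following Proposition~\ref{prop.notori}) and requires the atoroidal-piece analysis together with the Option~C~$\Rightarrow$~Option~A dichotomy from \cite{Fe1}; it does not follow from the continuity of the universal-circle identifications. To repair step three you should drop the disjointness claim entirely: closedness under limits and deck translation, together with the wall structure from step two and the local product structure over the ideal-point data, already give the immersed leafwise geodesic lamination the proposition asserts.
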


Notice that since different walls may intersect, we keep track
of the leaves of $\cL^s$ and not just the set.

\subsection{JSJ decomposition and $\rrrr$-covered foliations}
\label{ss.jsj}

Here we review some results from from \cite{FP1}.
The JSJ decomposition of an irreducible manifold splits
it into Seifert and atoroidal pieces \cite{He,Ja,Ja-Sh}.
Since we are considering non orientable manifolds we allow
Klein bottles amongst the cutting surfaces. 
Let $M$ be a $3$-manifold with a Reebless, $\rrrr$-covered
foliation with hyperbolic leaves.

Let $T$ be a torus or Klein bottle in the JSJ decomposition
and $\widetilde T$ be a lift to $\mt$.
Then $\widetilde T$ with its path metric
is quasi-isometrically embedded in $\mt$. This follows from
\cite[Theorem 1.1]{Ka-Le}, see also \cite[Section 3.1]{Ng}.
Then one can isotope $T$ so that $\widetilde T$ intersects
each leaf $F$ of $\wcF$ in a single component which is a
quasigeodesic in $F$. Then one can pull tight these quasigeodesics,
and assume that $T$ satisfies that 
$\widetilde T$ intersects
leaves of $\wcF$ in geodesics. 
We always assume this is the case for any $T$ a cutting
surface of the JSJ decomposition.
We say that $T$ is in {\em good position}.

In addition we have the following very important fact
(Proposition 4.4 of \cite{FP1}):
for any $F$ leaf of $\wcF$ let $\ell_F = \widetilde T \cap F$,
a geodesic in $F$ with ideal points $a_F, b_F$ in $S^1(F)$.
Then the set of $b_F$ as $F$ varies in $\wcF$ is a leaf of
the vertical foliation in $\cA$. In other words all $b_F \in S^1(F)$
correspond to the same point in the universal circle $\cU$ of $\cF$.

\section{Properties of the immersed leafwise geodesic
laminations}

In this section we prove the main ingredients to produce
the pseudo-Anosov flow in an atoroidal piece $P$.

\begin{proposition} \label{prop.crossing}
 Let $\cL^s, \cL^u$ be the immersed
leafwise geodesic laminations transverse to $\cF$, as
in Proposition \ref{prop.laminations}. Then no leaf of $\cL^s$ 
tranversely
intersects a torus or Klein bottle of the JSJ decomposition.
\end{proposition}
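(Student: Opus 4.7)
The plan is to argue by contradiction, supposing that some leaf $L$ of $\wcLs$ transversely intersects a lift $\widetilde T$ of a JSJ cutting torus or Klein bottle $T$ in good position. The first step is to rewrite this crossing entirely in terms of the universal circle $\cU$. A leaf of $\wcLs$ is a wall: $L$ is the saturation of a geodesic, so $L$ is determined by a pair $\{p,q\}\subset \cU$, and $L\cap F$ is, for each leaf $F$ of $\wcF$, the geodesic in $F$ with ideal endpoints corresponding to $p$ and $q$. By the key fact from \cite[Proposition 4.4]{FP1} recalled in Section \ref{ss.jsj}, the plane $\widetilde T$ in good position likewise has the property that the two ideal endpoints of $\widetilde T\cap F$ in $S^1(F)$ correspond to a fixed pair $\{\alpha,\beta\}\subset \cU$, independent of $F$. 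Since the identifications $S^1(F)\leftrightarrow \cU$ are order-preserving homeomorphisms, the assumption that $L$ and $\widetilde T$ cross transversely in some leaf reduces to: the pairs $\{p,q\}$ and $\{\alpha,\beta\}$ link in $\cU$.

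The next step is to unpack the limit construction from Proposition \ref{prop.laminations}. The geodesic $\ell_0$ whose saturation equals $L$ is a Hausdorff limit of deck translates $\gamma_i Z_i$ of thin tops of distortion parallelepipeds $\cP_i$. Here $Z_i$ is the ideal quadrilateral with four cyclically ordered vertices $a_i,b_i,c_i,d_i \in \cU$ sitting in the top leaf of $\cP_i$. Thinness of $Z_i$ in the direction of the two opposite sides $\overline{a_i b_i}$ and $\overline{c_i d_i}$ forces the other two pairs $(a_i,d_i)$ and $(b_i,c_i)$ to cluster in the ideal circle of the top leaf; after passing to a subsequence, $\gamma_i a_i,\ \gamma_i d_i \to p$ and $\gamma_i b_i,\ \gamma_i c_i\to q$ in $\cU$. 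Because cyclic order is preserved under the universal circle identifications and $\{\alpha,\beta\}$ links $\{p,q\}$, for all large $i$ the six points sit in $\cU$ in cyclic order $\gamma_i a_i,\alpha,\gamma_i b_i,\gamma_i c_i,\beta,\gamma_i d_i$. Writing $T_i := \gamma_i^{-1}\widetilde T$, this combinatorial conclusion forces $T_i$ to cut the quadrilateral $Z_i$ (and in fact each quadrilateral $Q_F$ over the height interval of $\cP_i$) transversely through the two thin-direction sides, namely those with vertex pairs $(a_i,b_i)$ and $(c_i,d_i)$.

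The final step, which I expect to be the main obstacle, is to extract a contradiction. At the top leaf $F_i^{\mathrm{top}}$, the chord $T_i\cap Z_i$ joins two sides of $Z_i$ whose mutual distance equals the thinness of $Z_i$, a quantity tending to $0$; yet the $S^1$-endpoints of $T_i\cap F_i^{\mathrm{top}}$ are the images of the fixed universal-circle points $\gamma_i^{-1}\alpha$ and $\gamma_i^{-1}\beta$. The plan is to convert this ``collapsing-chord of a quasi-isometrically embedded plane'' picture into an ideal-boundary statement, combining (i) the detailed construction of the distortion parallelepipeds in \cite{Fe1}, particularly the existence of an intermediate regular height which pins down how the four vertices $a_i,b_i,c_i,d_i$ sit in $\cU$, (ii) the quasi-isometric embedding of $\widetilde T$ recalled in Section \ref{ss.jsj}, and (iii) the $\mathbb{Z}^2$-stabilizer of $\widetilde T$ in $\pi_1(M)$, which fixes $\{\alpha,\beta\}$ in $\cU$. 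Together these should force either $\gamma_i^{-1}\alpha \to p$ or $\gamma_i^{-1}\beta \to q$ in $\cU$ (up to swap), contradicting that $\{p,q\}$ and $\{\alpha,\beta\}$ are disjoint linked pairs. Turning this outline into a precise argument is the technical heart of the proof, and is exactly where one must use the concrete definition of the distortion parallelepipeds rather than only their universal-circle endpoints.
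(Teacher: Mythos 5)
Your first two steps correctly set the stage and match the paper: $L$ is a wall with a fixed pair of universal-circle endpoints, $\widetilde T$ in good position also has a fixed pair by \cite[Prop.\ 4.4]{FP1}, and a transverse crossing in one leaf is a linking statement in $\cU$ seen in every leaf; likewise the description of the clustering of the vertices of the thin tops $Z_i$ is right. But you explicitly stop short of the argument that actually yields the contradiction, and the route you sketch (forcing $\gamma_i^{-1}\alpha\to p$ or $\gamma_i^{-1}\beta\to q$ and calling this a contradiction with disjointness of the pairs) is not obviously a contradiction at all, since $\gamma_i$ diverge and there is no reason that a sequence of translates of a fixed point in $\cU$ converging to $p$ should conflict with $\alpha\neq p$.

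The actual engine of the paper's proof is a different idea that does not appear in your sketch. Because the parallelepipeds $\cP_i$ have the same universal-circle vertices at every height, the \emph{bottoms} $X_i$ (thin in the opposite direction) meet $\widetilde T$ in \emph{long} geodesic arcs $\zeta_i$, while the tops meet $\eta=F\cap\widetilde T$ in short arcs $\mu_i\to p$. One then applies elements $\gamma_i\in\pi_1(T)$ (the $\mathbb{Z}^2$ stabilizer of $\widetilde T$, not the arbitrary translates you use) to carry $\zeta_i$ into $\eta$ so that $\gamma_i(\zeta_i)\supset\mu_i$ and its endpoints escape. Identifying the complementary interval $I\subset\cU$ with $\eta$, this says the induced $\pi_1(T)$-action on $\eta\cong\rrrr$ has, for each $i$, a point $x_i\to p$ pushed arbitrarily far while $g_i=\rho(\gamma_i)$ also fixes a point $z_i$ between $x_i'$ and $x_i$ converging to $p$; the fixed-point Lemma~\ref{lem.fixedpoint} then produces a global fixed point of the whole $\mathbb{Z}^2$-action on $I$. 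Finally, that global fixed point determines a $\pi_1(T)$-invariant half-plane $A$ in $\mt$ (rays perpendicular to $\widetilde T$ in each leaf aimed at the fixed point), and $\pi_1(T)\cong\mathbb{Z}^2$ would have to act freely and properly discontinuously on the line $\partial A$, which is impossible. This use of the bottoms of the parallelepipeds, the $\pi_1(T)$-translation trick, the $\mathbb{Z}^2$ fixed-point lemma, and the invariant half-plane is precisely the ``technical heart'' you flagged as missing, and it is substantively different from the asymptotic-endpoint argument you propose.
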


\begin{proof}
Suppose that some $L$ in $\wcLs$ transversely
intersects a lift
$\widetilde T$ for $T$ one of the tori or Klein bottles
of the JSJ decomposition.
If necessary lift to a double cover and we can
assume that $M$ is 
orientable. Hence we can assume that $T$ is a torus.

Recall that both $\widetilde T$ and $L$ intersect leaves of $\wcF$
in geodesics so that endpoints are constant under the
universal circle identification.
Therefore this transverse intersection of $\widetilde T$ and $L$ 
is seen in every leaf of $\wcF$. In particular up to deck
transformations there are parallelpipeds $\cP_i$ so that
the top quadrilaterals, call them  $Z_i$,
 converge to $\ell = F \cap L$ for
some $F$ in $\wcF$, see construction of $\cL^s$ and
section \ref{sec.background}.
The quadrilaterals $Z_i$ intersect $L \cap F$ in segments whose
length converge to zero. 
We can assume that all the tops of the parallelepids are in $F$.
Then $Z_i \cap F = \mu_i$ are segments converging to 
$L \cap F \cap \widetilde T = p$.
Let $\eta = F \cap \widetilde T$.

The bottoms of the parallelepipeds $\cP_i$, call them 
$X_i$ are quadrilaterals that are very thin in the other direction.
They still intersect $\widetilde T$ in a geodesic arc. 
This is because the parallelepid intersects leaves of 
$\wcF$ in ideal quadrilaterals with ideal points constant
when identified with the universal circle.
Now this geodesic arc is not very short, as was the case for
the top quadrilaterals, but rather very long. Call these geodesic
arcs $\zeta_i$. These project in $M$ into $T$. Since the foliation induced
by $\cF$ on $T$ is minimal, by adjusting the bottom
slightly, we can assume that $\pi(\zeta_i)$ contains
$\pi(\mu_i)$ and distance of $\pi(p)$ from both endpoints of $\pi(\zeta_i)$
along $\pi(\zeta_i)$ goes to infinity with $i$.

Let $\gamma_i$ be deck transformations of $\mt$ which are
also deck transformations of $\widetilde T$ which take
$\zeta_i$ to segments in $\eta$ containing
$\mu_i$ and so that distance along $\eta$
from $p$ to endpoints of $\gamma_i(\zeta_i)$ goes to infinity.
This is the crucial property: we are using that $T$ is compact
so we can always bring (by deck transformations)
long segments of the lifted foliation
$\wcF \cap \widetilde T$ to intersect a compact set.

We now analyze the action of $\gamma_i$ on the universal circle
$\cU$.
We use the identification of $\cU$ with $S^1(F)$. 
Here $\widetilde T \cap F$ is the geodesic $\eta$ in $F$.
Let $I$ be a complementary interval in $S^1(F)$ of the ideal points
of $\eta$.
We can parametrize $I$ as follows:
for each $q$ in $I$ there is a unique $q'$ in $\eta$ 
so that the geodesic ray in $F$ from $q'$ with ideal point $q$
is perpendicular to $\eta$. In this way
$I$ is parametrized
by $\eta$. The action of $\pi_1(T)$ on $\cU$ preserves 
the points of $\cU$ corresponding to the ideal points
of $\eta$ in $F$. In other words, when expressing this action
in terms of $S^1(F)$, it follows that $\pi_1(T)$ preserves
$I$. We analyze the action of $\pi_1(T)$ on $I$.
Since $I$ is canonically identified with $\eta$, this induces
an action of $\pi_1(T)$ on $\eta$.
Let this action be denoted by $\rho$.

Let the endpoints of $\mu_i$ be $x_i, x'_i$ and the
enpoints of $\gamma_i(\zeta_i)$ be $y_i, y'_i$. By renaming
we assume that $y'_i, x'_i, x_i, y_i$ are always linearly
ordered in $\eta$.
Therefore we have the following property:

\vskip .08in
\noindent
{\bf {Property 1}} $-$
We have points $x_i$ in $\eta$ converging to $p$ which are
taken by $\rho(\gamma_i)$ to $y_i$ which escape in $\eta$.
\vskip .08in

We use the following result. This result almost surely
has more hypothesis than what is needed to get a global
fixed point of a $\zzzz^2$ action on $\rrrr$, but it
suffices for our needs:

\begin{lemma} \label{lem.fixedpoint}
Let $G \cong \zzzz^2$ acting 
on $\rrrr \cong \eta$ so that there are points $x_i$ in $\rrrr$
in a compact set of $\rrrr$, and let $g_i$ in $G$ with $g_i(x_i)
\rightarrow \infty$ and $g_i$ has a fixed point $< x_i$.
For each $i$ let $z_i = \lim_{n \rightarrow -\infty} g^n_i(x_i)$.
Suppose that $d(z_i,x_i)$ converges to $0$ as $i \rightarrow \infty$
and $z_i$ converges to $z_0$ in $\rrrr$.
Then $z_0$ is a global fixed point of $G$.
\end{lemma}

\begin{proof}{}
Since $G \cong \zzzz^2$, 
the action is by orientation 
preserving homeomorphisms. 

Let $\beta$ in $G$ and suppose that $\beta$ does not
fix $z_0$. Up to taking an inverse we assume that $z_0 < \beta(z_0)$.
Now $x_i, z_i$ converge to $z_0$ as $i \rightarrow \infty$.
Fix $i$ big enough so that $\beta(z_i) > x_i$, but $\beta(z_i) < 
g_i(x_i)$. 
Then

$$g_i \beta(z_i) \ = \ \beta g_i(z_i) \ = \ \beta(z_i) \ > \ x_i.$$

\noindent
In other words $x_i < \beta(z_i) <  g_i(x_i)$,
and $\beta(z_i)$ fixed by $g_i$. This is a contradiction
and finishes the proof of the lemma.
\end{proof}

\noindent
{\em {Conclusion of the proof of Proposition \ref{prop.crossing}}}.
Let $g_i = \rho(\gamma_i)$  acting on $\rrrr \cong \eta$.
By Property 1, we have $x_i$ in $\rrrr$ with $g_i(x_i) = y_i$
and $y_i$ converging to $\infty$.
In addition $g_i$ has a fixed point $z_i$ in $[x'_i,x_i]$ and 
$[x'_i,x_i]$ converges to $p$. So $g_i, z_i, x_i$ satisfy
the other properties of Lemma \ref{lem.fixedpoint} with
$z_0 = p$.
The lemma shows that $\pi_1(T^2)$ has a global fixed
point. 

Consider the set $A$ of $\mt$ which in each leaf $E$ of
$\wcF$ is the geodesic ray in $E$ satisfying
\begin{itemize}
\item The starting point of $r$ is in $\widetilde T \cap E$
and $r$ is perpendicular in $E$ to $\widetilde T \cap E$.
\item The ideal point of $r$ in $S^1(E)$ and the global
fixed point of 
of the action of $\pi_1(T^2)$ on $I \subset S^1(F)$ 
correspond to the same point in the uninversal circle $\cU$.
\end{itemize}
Since $\widetilde T$ is a properly embedded plane, the
union of the geodesic rays $r$ as above forms an embedding
of a closed half plane in $\mt$.

We claim that $H = \pi_1(T^2)$ leaves $A$ invariant.
Any $\gamma \in \pi_1(T^2)$ leaves $\widetilde T$ invariant.
When acting on the universal circle $\cU$ then $\pi_1(T^2)$ 
fixes the point corresponding to the global fixed point
of $\pi_1(T^2)$ acting on $I$.
Let $E$ in $\wcF$ and $r = E \cap \widetilde T$. 
Then $\gamma(r)$ is contained in a leaf $D$ of $\wcF$, and 
since $\gamma$ is an isometry then $\gamma(r)$ is a geodesic
ray in $D$, $\gamma(r)$ starts in $\widetilde T$ and
$\gamma(r)$ is perpendicular to $\widetilde T \cap D$ in $D$.
Finally $\gamma(r)$ has ideal point in $S^1(D)$ which is
the same point as the global fixed point of $\pi_1(T^2)$ acting
on $I$ under the universal circle identification.
It follows that $\gamma(r) = D \cap \widetilde T$, hence
$\gamma$ preserves $A$.

In particular $\pi_1(T^2)$ leaves invariant
the infinite curve $\partial A$. 
This is impossible since $\pi_1(T^2)$ has to act
freely and properly discontinuously on $\partial A$.

This finishes the proof of 
Proposition \ref{prop.crossing}.
\end{proof}

We now prove a further property:

\begin{proposition} \label{prop.notori}
Suppose that $P$ is an atoroidal piece of $M$.
Then there is an immersed leafwise
geodesic   lamination $\cL^s$ (as in Proposition \ref{prop.laminations})
in $P$ and no leaf of $\cL^s$ isotopic to a component of
$\partial P$.
Similarly for $\cL^u$.
\end{proposition}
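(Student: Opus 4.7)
The plan is to prove \ref{prop.notori} in two stages: first, produce a nonempty sublamination of $\cL^s$ meeting the interior of $P$; and second, rule out leaves isotopic to components of $\partial P$ by extending the $\zzzz^2$-action contradiction from \ref{prop.crossing}, supplemented in the degenerate case by a modification of $\cF$.

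For the first stage, \ref{prop.crossing} already tells us that every leaf of $\cL^s$ is contained in a single JSJ piece or lies inside a JSJ torus. If no leaf of $\cL^s$ met the interior of $P$, then the transverse geometry of $\wcF$ across $\pi_1(P)$-orbits would be essentially undistorted, and the atoroidal dichotomy of \cite{Fe1,Cal1} applied to $\cF|_P$ would force a Seifert fibered structure on $P$, contradicting atoroidality. So I would select parallelepipeds $\cP_i$ whose tops $Z_i$ accumulate, up to deck transformations, inside $P$, and take their limit walls; these project to leaves of $\cL^s$ lying inside $P$.

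For the second stage, suppose for contradiction a leaf $L$ of $\cL^s$ is isotopic to a component $T$ of $\partial P$. After passing to a double cover and conjugating, I arrange $\pi_1(L) = \pi_1(T)$. The saturation property endows the wall $\widetilde L$ with a pair $\{p_+,p_-\} \subset \cU$ fixed by $\pi_1(L)$, and by Subsection \ref{ss.jsj} the good position of $T$ determines a pair $\{q_+,q_-\} \subset \cU$ fixed by $\pi_1(T)$. If $\{p_+,p_-\} \neq \{q_+,q_-\}$, then $\pi_1(T) \cong \zzzz^2$ fixes at least three points on $\cU$, and I would build the invariant closed half-plane $A$ of \ref{prop.crossing}, consisting in each leaf $E$ of $\wcF$ of the geodesic ray perpendicular to $\widetilde T \cap E$ whose ideal point corresponds to the additional $\cU$-fixed point. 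The closing argument of \ref{prop.crossing} then applies verbatim: $\zzzz^2$ cannot act freely and properly discontinuously on the infinite curve $\partial A$, giving the desired contradiction.

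The hard case, and the step I expect to be the main obstacle, is the remaining possibility $\{p_+,p_-\} = \{q_+,q_-\}$: the walls $\widetilde L$ and $\widetilde T$ then agree as universal-circle saturated walls, so $L$ is literally a JSJ boundary torus. Here the plan is to implement the foliation modification alluded to in the introduction: perturb $\cF$ in a collar neighbourhood of $T$ in the JSJ piece on the far side of $T$, producing a new foliation $\cF'$ that still satisfies the main-theorem hypotheses but whose good-position geodesic representative of $\widetilde T \cap F$ has shifted ideal points $\{q_+',q_-'\} \neq \{p_+,p_-\}$, while leaving the parallelepipeds producing $\widetilde L$ and hence the pair $\{p_+,p_-\}$ essentially unchanged. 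One is then back in the previous distinct-pair case and the contradiction applies. The $\cL^u$ statement follows by the symmetric argument with the bottoms $X_i$ in place of the tops $Z_i$.
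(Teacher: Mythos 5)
Your proposal misses the paper's central device, which is to replace $M$ by the doubled closed manifold $N = P \cup P'$, run the entire construction of Proposition \ref{prop.laminations} for the foliation $\cF'$ on $N$, and then use the $\zzzz/2$-reflection symmetry of $N$ across $\partial P$ to symmetrize the top quadrilaterals $Z_i'$ with respect to $\widetilde T$. That symmetrization forces the bottoms $X_i$ to also be symmetric with respect to $\widetilde T$, and this visibly contradicts the bottoms becoming thin in the opposite direction. Your proposal works instead with an arbitrary $\cL^s$ arising from parallelepipeds in $M$ itself, and the paper's own Remark after the proposition explicitly concedes that this stronger claim remains open; so the approach you are attempting is one the author was unable to carry out.

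Two specific steps in your proposal would not go through. First, your stage one invokes ``the atoroidal dichotomy of \cite{Fe1,Cal1} applied to $\cF|_P$,'' but those results apply to foliations of \emph{closed} manifolds; $\cF|_P$ lives on a manifold with boundary, and the doubling is precisely what makes that dichotomy available (it is applied to $\cF'$ on the closed $N$). Second, in your stage two the case $\{p_\pm\}\neq\{q_\pm\}$ is vacuous: if $L$ is isotopic to a boundary component $T$ then (as in the proof of Lemma \ref{lem.properties1}) the lift $\widetilde L$ is boundedly isotopic to $\widetilde T$, so in every leaf $F$ of $\wcF$ the geodesics $\widetilde L\cap F$ and $\widetilde T\cap F$ are boundedly isotopic geodesics and hence equal, giving $\{p_\pm\}=\{q_\pm\}$ always. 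This leaves only your ``hard case,'' and there the proposed fix is incoherent: once $\widetilde L = \widetilde T$, the pair $\{p_\pm\}$ \emph{is} the ideal-point pair of the good-position representative of $\widetilde T$, so any perturbation of $\cF$ that moves $\{q_\pm\}$ necessarily moves $\{p_\pm\}$ with it. Moreover a local perturbation of $\cF$ changes the universal circle $\cU$ globally (hence all saturation walls and all parallelepipeds), so there is no way to move the boundary geodesic while ``leaving the parallelepipeds essentially unchanged.'' The ``manipulation of the foliation'' alluded to in the introduction refers to the doubling construction, not a local perturbation. Finally, you also do not address the subtle point the paper handles at the end of its proof: after constructing $\cL^s$ in $N$, one must readjust the parallelepipeds so that they sit inside a single lift $\wP$ of $P$ in $\widetilde N$, so that they can be reinterpreted as parallelepipeds in $\mt$.
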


\begin{proof}
We do the proof for $\cL^s$.
In order to do this we will use the following doubling trick.
Cut $M$ along the components of the boundary of $P$ and
double $P$ along the boundary. Let this be the manifold $N$,
which we think of $P \cup P'$ where $P'$ is a copy of $P$.
Now do the whole analysis for $N$. 
As in the proof of Proposition \ref{prop.crossing}
we initially lift to a double cover if necessary and
assume that $N$ is orientable.

The manifold $N$
has an $\rrrr$-covered
foliation $\cF'$ which is the double of $\cF_{|P}$.
This folation $\cF'$ has hyperbolic leaves.
Let $\cV$ be the universal circle of $\cF'$.
If the action of $\pi_1(N)$ on $\cV$ is uniformly quasisymmetric,
then $N$ is Seifert fibered. In particular $P$ is Seifert 
fibered, contradiction to the hypothesis on $P$.
It follows that the action of $\pi_1(N)$ on $\cV$
is not uniformly quasisymmetric.
Hence there are parallelepipeds $\cP_i$ producing laminations in $N$
given by Proposition \ref{prop.laminations}, and 
still denoted by $\cL^s, \cL^u$. We will prove that they
induce laminations in $M$ as required.

By the previous proposition we know that no leaf 
$\cL^s$ intersects a boundary component of $P$ 
transversely.
If we prove that no boundary component of $P$ is a leaf of
$\cL^s$ then we get a sublamination of $\cL^s$ contained
in either $P$ or $P'$. If contained in $P'$ then the mirror
image is contained in $P$.

Suppose that a component $T$ of $\partial P$ is contained in $\cL^s$.
We use the setup of the proof of the previous proposition.
Suppose that some leaf of $\cL^s$ is a torus $T$
of the JSJ decomposition of $N$. Let $\widetilde T$ a lift.
As in the proof of the previous proposition there are parallelepipeds
denoted by $\cP'_i$ so that the tops are in a fixed leaf $F$ of $\wcF'$
and converge to $\eta = F \cap \widetilde T$.
Let the ideal points of $\eta$ in $S^1(F)$ be $x_1, x_2$.
We denote the tops by $Z'_i$. 
We will adjust the tops and produce a new set of parallelepipeds
$\cP_i$, still satisfying the thin conditions as before.

Let $a'_i,b'_i,c'_i,d'_i$ be the ideal points of $Z'_i$ in
$S^1(F)$, so that the geodesics $(a'_i,b'_i)$ and $(c'_i,d'_i)$
are very close in $F$ to $\eta$.
Up to renaming the points, assume that
$(a'_i,d'_i)$ are very close in $F \cup S^1(F)$ to $x_1$ and
$(b'_i,c'_i)$ are very close to $x_2$.
Now we do the symmetrization of $Z'_i$ with respect to $\eta$.
Recall that $N$ is the double of $P$, so there is a reflection
in the universal circle $\cV$ of $\cF'$ with respect to the
ideal points of $\eta$ (seen as points in $\cV$).
Denote this reflection map by $\xi: \cV \rightarrow \cV$.
Replace $Z'_i$ by an ideal quadrilateral $Z_i$ in $F$ with
ideal points $a_i, b_i, c_i, d_i$ as follows. Consider the 
pair $b'_i, c'_i$. Both are very close to $x_2$ and are distinct
from each other, so at least one is distinct from $x_2$.
If $\xi(b'_i) = c'_i$, we choose $b_i = b'_i$ and $c_i = c'_i$.
Otherwise one of $b'_i$ or $c'_i$ is farther from $x_2$ $-$ use
the reflection $\xi$ to compare them if on opposite sides of $x_2$. 
Suppose the farthest point is $b'_i$. Then let $b_i = b'_i$
and choose $c_i = \xi(b_i)$. Do the same for the pair $a'_i, d'_i$.
The resulting quadrilateral with ideal points $(a_i,b_i,c_i,d_i)$
is denoted by $X_i$. It has ideal points still very close to
$x_1, x_2$ respectively, so it is very thin in the same
direction that $X'_i$ is.
In particular by construction the thinness in this direction
goes to $0$ as $i \rightarrow \infty$.
But $X_i$ is less thin in this direction than $X'_i$ since we
may have pushed a pair of endpoints slightly farther away from
$x_1, x_2$ respectively.

Let now $\cP_i$ be parallelepiped intersecting the same set of leaves
of $\wcF$ that $\cP'_i$ intersects, but the top is now $Z_i$ instead
of $Z'_i$. Let $X_i$ denote the bottoms of $\cP_i$. 
The tops are very thin in the direction very close to $\eta$.
Since we made the tops slightly thinner in the opposite direction $-$
they are still very thick in that direction, but slightly less
thick, then the bottoms $X_i$ still have thinness converging
to $0$ in the opposite direction.
We explain a bit more: when moving the ideal quadrilaterals from
$X_i$ across leaves of $\wcF'$ using the universal circle to
move the ideal points, the following happens: 
the top quadrilateral $Z'_i$ moves to $X'_i$ which is very
thin in the opposite direction. Since $Z_i$ is slightly thinner than
$Z'_i$ in the opposite direction then $Z_i$ moves to even thinner
quadrilaterals $X_i$ in the opposite direction.

We use this sequence of parallelepipeds $\cP_i$.
Notice that by construction of $N$ as the double of $P$
the universal circle $\cV$ is symmetric with respect to $\eta$.
In other words this implies that for any leaf $E$ of $\wcF$
intersecting $\cP_i$ then the quadrilateral $Q^i_E = \cP_i \cap E$
in $E$ is symmetric with respect to $\widetilde T \cap E$.
In particular the bottom $X_i$ of $\cP_i$ is also symmetric 
with respect to the intersection of $\widetilde T$ with that
leaf.
In addition any deck translate of $\cP_i$ under an element
of $\pi_1(T)$ is still symmetric with respect to $\widetilde T$.

Orient the one dimensional foliation $\cF' \cap T$.

\begin{figure}[ht]
\begin{center}
\includegraphics[scale=1.00]{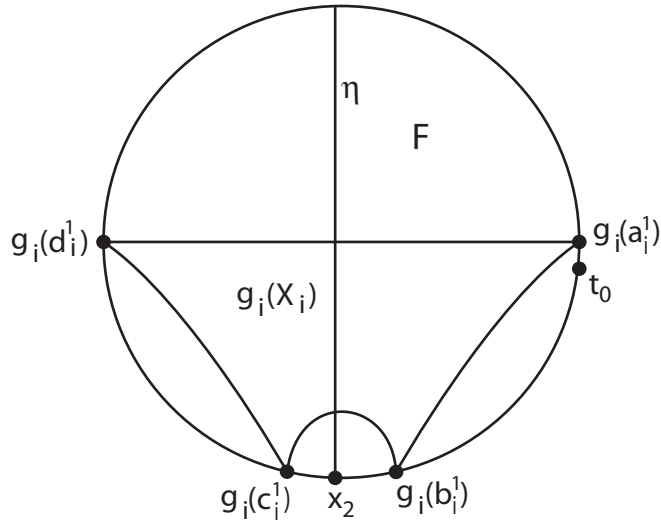}
\begin{picture}(0,0)
%
%
\end{picture}
\end{center}
\vspace{-0.5cm}
\caption{An example of action on the double $N = P \cup P'$.}
\label{figure2}
\end{figure}

We will use the setup of Proposition \ref{prop.crossing}.
Now $I$ is an open interval in the universal circle $\cV$
of $\cF'$.
Recall that $X_i$ are the bottoms of the parallelepipeds
$\cP_i$. Consider $X_i \cap \widetilde T$ which is a compact
segment denoted by $\zeta_i$ of the foliation induced by $\cF'$ in $T$.
Let $v_i$ be the positive endpoint of $\zeta_i$ with respect to
the orientation of $\cF' \cap T$.
Up to subsequence assume that 
$\pi(v_i)$ converges. Up to a slight modification of
the bottoms $X_i$ we can assume that $\pi(v_i)$ are
always in the same local leaf of the foliation in $T$.
So there are $g_i$ in $\pi_1(T)$ with $g_i(v_i)$ in a fixed leaf
$F$ of $\wcF$ which we can assume is $F$, and suppose that $g_i(v_i)$
converging to a point $v_0$. In other words
$g_i(v_i)$ are all in the curve $\eta = F \cap \widetilde T$. 
If the lengths of $g_i(\zeta_i)$ do not converge to infinity,
then the induced action of $g_i$ on $\eta \cong I$ 
has big intervals which are contracted to a bounded 
subcompact interval. This brings us to a setup that
Lemma \ref{lem.fixedpoint} produced a global fixed
point of the action of $\pi_1(T)$ on $I$. As seen
in the proof of Proposition \ref{prop.crossing} this
leads to a contradiction.

We conclude that the lengths of $g_i(\zeta_i)$ converge
to infinity. This means that if the other endpoint of $g_i(\zeta_i)$
is denoted by $q_i$ then $q_i$ escapes in $\eta$.
But the bottoms $X_i$ are symmetric with respect to $\widetilde T$
and the action of $\pi_1(T)$ on $\cV$ is symmetric with
respect to $\widetilde T$. 
Let the ideal points of $X_i$ be $a^1_i, b^1_i, c^1_i, d^1_i$
in $S^1(E_i)$, corresponding to $a_i, b_i, c_i, d_i$ respectively
in $S^1(F)$.
Here $E_i$ are the leaves of $\wcF'$ containing the bottoms
$X_i$ of the parallelepipeds $\cP_i$.
 By the above $g_i(a^1_i)$ converges to a point
$t_0$ in the interior of $I$. Also $g_i(b^1_i)$ converges
to an ideal point $x_2$ of $\eta$ in $S^1(F)$.
By the symmetry property, $g_i(c^1_i)$ converges to $t_1$
and $g_i(d^1_i)$ converges to the mirror image of $t_0$
on the other side of $\eta$. See figure \ref{figure2}.

This means that the sides $(g_i(d^1_i),g_i(a^1_i))$
and $(g_i(c^1_i),g^i(b^1_i))$ of $g^i(X_i)$ are not
getting close to each other in $F$.
Hence $(d^1_i,a^1_i)$ and $(c^1_i,b^1_i)$ are not getting
closer to each other in $E_i$ as well. This is a contradiction,
by construction of  the parallelepipeds $\cP_i$.

This proves that $T$ cannot be a leaf of $\cL^s$.
In fact we proved the following: for any choice of 
distortion parallelepipeds $\cP_i$ in $\wN$ so that
the tops $Z_i$ converge to a geodesic $\ell$ in any
leaf $F$ of $\wcF'$ then $\ell$ cannot cross any
lift of torus in $N$, nor be contained in any such lift.

\vskip .12in
\noindent
{\bf Dealing with a subtle point}

We obtained an immersed  leafwise geodesic
lamination $\cL^s$ in $N$ which is contained
in $P$ and it is obtained by taking limits of distortion
parallelepipeds $\cP_i$. In fact $\cL^s$ is contained in
the interior of $P$ and so induces an immersed leafwise
geodesic lamination in $M$. The subtle point is that
the distortion parallelepipeds $\cP_i$ are contained
in $\widetilde N$, but do not necessarily generate distortion
parallelepipeds in $\mt$ which will generate $\cL^s$ in $M$.
We will adjust our construction of the distortion 
parallelepipeds.

For notational reasons we will rename our parallelepipeds
$\cP'_i$.
We will adjust the $\cP'_i$ to obtain new parallelepipeds
(to be denoted $\cP_i$) with the properties we need.
What we want is that the distortion parallelepipeds
can be chosen contained in a fixed lift $\wP$ of $P$ to
$\widetilde N$. 

First we prove a preliminary fact. Consider the leaf $F$
of $\wcF'$ as in the beginning of the proof of the proposition.
The intersection $F \cap \wP$ is a hyperbolic surface with
geodesic boundary. The tops $Z'_i$ of the parallelepipeds $\cP'_i$ 
converge to a geodesic $\ell$ in $F$. We proved that $\ell$ 
is contained in the interior of $F \cap \wP$. 

We claim that $\ell$ is not asymptotic to a geodesic $g$ in $F$
which is the intersection of $\wT$ with $F$ for some 
JSJ surface $T$ of $N$ (notice that here we are not taking a double
cover of $N$ to make it orientable). 
Since we are taking all $\pi_1(N)$ translates and closures
of the limits
this means that some geodesic $E \cap \wT$ in $\wT$ ($E$
leaf of $\wcF'$) is contained in all the
limits. This we proved above that it  is impossible, proving the claim.
In addition any ideal point $p$ of $\ell$ in $F$ is accumulated
in $F \cup S^1(F)$ by geodesics $g_i$ which are intersections
of lifts of JSJ surfaces with $F$. Otherwise a half
plane in $F$ does not intersecting such a lift, and hence taking
limts a full leaf does not intersect such a lift contradiction.

Now we adjust the parallelepipeds $\cP'_i$. Let $\ell$ have
ideal points $x_1, x_2$. Let the tops of $\cP'_i$ be $Z'_i$ with
ideal points $a'_i, b'_i, c'_i, d'_i$ so that $a'_i, d'_i$ 
are very close to $x_1$ and $b'_i, c'_i$ very close to $x_2$.
We will enlarge the ideal quadrilateral $Z'_i$ still keeping
it very thin in the direction close to the geodesic $\ell$.
Let $I_i$ be the interval in $S^1(F)$ with ideal points
$a'_i, d'_i$ and very close to $x_1$. Recall that there are
sequences of 
endpoints of $\wT' \cap F$ for $\wT'$ lifts of JSJ surfaces
converging to $x_1$.  Hence for $i$ big we can choose
$a_i$ arbitrarily close to $a'_i$, $a_i$ not in the interior
of $I_i$ and $a_i$ an ideal point of $\wT' \cap F$ for 
some lift $\wT'$ of a JSJ surface. Do the same for
$d'_i, b'_i, c'_i$, producing $a_i, b_i, c_i, d_i$.
These are distinct and define an ideal quadrilateral $Z_i$.

Let $\cP_i$ the parallelepipeds intersecting the same leaves
of $\wcF'$ that $\cP'_i$ does but defined by the ideal
points $a_i, b_i, c_i, d_i$.
Now we prove properties of $\cP_i$. 
The geodesics $(a_i,d_i)$ are very close to $(a'_i,d'_i)$ 
and similarly $(b_i, c_i)$  are very close to $(b'_i, c'_i)$.
So $Z_i$ is very thin in the same direction that $Z'_i$ is.
Let $X_i$ be the bottoms of the parallelepipeds $\cP_i$.
The $Z_i$ are very thin in the $\ell$ direction, but 
less thin than $Z'_i$ in this direction.
This implies that the $Z_i$ are a little bit thinner than
the $Z'_i$ in the other direction.
This implies that the
$X_i$ are even thinner than the $X'_i$ in the opposite
direction.

We conclude that the $\cP_i$ have thinness in the top ($Z_i$)
in one direction converging to zero, and in the bottom ($X_i$)
thinness in the other direction converging to zero. Hence the $\cP_i$ are
distortion parallelepipeds which yield laminations $\cL^s$
and $\cL^u$.

We finally prove the crucial property we want.
By choice of the points $a_i, b_i, c_i$ and $d_i$ they are
ideal points of the hyperbolic surface
$F \cap \wP$. In particular the geodesics
$(a_i,b_i), (b_i,c_i), (c_i,d_i)$ and $(d_i,a_i)$ are contained
in $F \cap \wP$. Hence the tops $Z_i$ are entirely contained
in $\wP$. Since the quadrilaterals in $\cP_i$ are obtained
by following the universal circle and so is $\wP$,
it follows that 
$\cP_i \cap E$ is contained in $\wP$ for any leaf $E$ of $\wcF'$ that
it intersects. In particular $\cP_i$ is entirely contained
in $\wP$. This is the fact we wanted to prove.

Since $\cP_i$ is entirely contained in $\wP$ then we can
think of them also as contained in $\mt$. The quadrilaterals
in $\cP_i$ have the same leafwise geometry whether seen in $\wN$ or 
in $\mt$. It follows that $\cP_i$ are distortion quadrilaterals
in $\mt$, and of course they generate the immersed leafwise
geodesic laminations $\cL^s$ and $\cL^u$ in $M$.
At this point we can completely forget $N$ and consider all
objects $\cP_i, \cL^s, \cL^u$ in $\mt$ or in $M$.

This finishes the proof of the proposition.
\end{proof}

By the above proposition there is an immersed
 lamination $\cL^s$ contained
in the interior of $P$ and an immersed
  lamination $\cL^u$ contained in the interior of $P$.

\begin{remark}
This proof used the auxiliary manifold $N$
a doubling of $P$. A priori some other construction
using distortion parallelepipeds in $M$
could yield a lamination so that a component of $\partial P$
is a leaf. We strongly believe this is not possible, but
we are not able to prove this at this point.
\end{remark}

\begin{proposition}
The laminations $\cL^s, \cL^u$ constructed
in Proposition \ref{prop.notori} are embedded.
\end{proposition}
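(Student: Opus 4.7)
The plan is to rule out transverse crossings between any two distinct walls of $\wcLs$ in $\mt$, which implies embeddedness of $\cL^s$ in $M$; the case of $\cL^u$ is symmetric. A wall in $\wcLs$ is the $\cU$-saturation of a leafwise geodesic and is therefore determined by an unordered pair $\{p,q\}\subset\cU$ of ideal endpoints. Two distinct walls cross transversely in $\mt$ if and only if their endpoint pairs are linked in the circular order of $\cU$; otherwise they are either disjoint or share an ideal point, and in the latter (asymptotic) case a standard argument using closedness of $\wcLs$ under $\pi_1(M)$ shows the projected leaves in $M$ either coincide or are disjoint. So embeddedness reduces to ruling out linked pairs of walls.

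I would first set up the approximation framework. By Proposition \ref{prop.notori}, the distortion parallelepipeds $\cP_k$ can be chosen inside a fixed lift $\wP$ of the atoroidal piece $P$. Given a wall $W$ with endpoints $\{p_1,p_2\}$, there exist $\gamma_k\in\pi_1(M)$ and a subsequence of tops $Z_k$ so that $\gamma_k Z_k$, as a sequence of ideal quadrilaterals, degenerates onto a geodesic realizing $W$ in a leaf of $\wcF$. Labeling the ideal vertices $a_k,b_k,c_k,d_k$ of $Z_k$ in cyclic order with $(a_k,b_k)$ and $(c_k,d_k)$ the pair of thin sides, preservation of cyclic order on $\cU$ under deck transformations (after passing to a double cover to ensure orientability, as in Propositions \ref{prop.crossing} and \ref{prop.notori}) gives $\gamma_k a_k, \gamma_k d_k\to p_1$ and $\gamma_k b_k, \gamma_k c_k\to p_2$ in $\cU$.

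For the contradiction, assume $W_1=\{p_1,p_2\}$ and $W_2=\{q_1,q_2\}$ are linked walls in $\wcLs$, and fix a leaf $F$ of $\wcF$ where $\ell_1=W_1\cap F$ and $\ell_2=W_2\cap F$ cross transversely at a point $x$. Approximate both as above by sequences $\gamma_k Z_k\to W_1$ and $\eta_k Z'_k\to W_2$. For $k$ large, the thin sides of $\gamma_k Z_k$ are $C^0$-close to $\ell_1$ in a leaf of $\wcF$ near $F$, while the thin sides of $\eta_k Z'_k$ are close to $\ell_2$; transversality of $\ell_1, \ell_2$ at $x$ then forces these sides to cross each other. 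Pulling this crossing back by $\gamma_k^{-1}$ transports it into a crossing between the thin sides of $Z_k$ itself and the thin sides of $\gamma_k^{-1}\eta_k Z'_k$; tracking the cyclic order of the eight ideal vertices involved and using the cyclic-order preservation on $\cU$, one concludes that the image pair $\gamma_k^{-1}\eta_k\{q_1,q_2\}$ must sit, relative to the cyclic order of the vertices of $Z_k$, in a position which, translated forward by $\gamma_k$, places $\{p_1,p_2\}$ and $\{q_1,q_2\}$ in an unlinked configuration in $\cU$, contradicting the linking hypothesis.

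The main obstacle is the diagonalization reducing both walls to limits of comparable (and ideally common) sequences of $Z_k$ and the precise control of the cyclic structure in the limit; this relies on compactness of $M$, $\pi_1(M)$-invariance of $\wcLs$, and the continuous dependence on leaves given by the universal circle identifications recorded in Section \ref{sec.background}. Once this reduction is made, the transverse crossing at $x$ imports to a crossing of thin-direction segments of concrete ideal quadrilaterals whose cyclic vertex structure dictates unlinking of the limit pairs in $\cU$, producing the required contradiction.
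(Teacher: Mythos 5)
Your strategy is genuinely different from the paper's, but there is a real gap at the crucial step. You aim to rule out self-crossings of $\cL^s$ directly by approximating both crossing walls with thin tops $Z_k$, $Z'_k$ of distortion parallelepipeds and then claiming that ``tracking the cyclic order of the eight ideal vertices $\ldots$ places $\{p_1,p_2\}$ and $\{q_1,q_2\}$ in an unlinked configuration in $\cU$.'' This is asserted, not derived, and in fact the claimed implication is false: deck transformations preserve circular order on $\cU$, so if the endpoint pairs of two walls are linked they remain linked under all the translations you apply. Nothing in the approximation by thin quadrilaterals forces the limiting pairs to be unlinked --- the hypotheses only say that each pair is a limit of vertices of a degenerating thin top, not that two such limiting pairs must be nested or disjoint. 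Indeed, a priori self-crossings of $\cL^s$ are exactly what one must worry about, and no local cyclic-order bookkeeping of a single parallelepiped (or two) can rule it out.

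The paper proves embeddedness by an entirely different mechanism that uses structure your argument never touches. It invokes the trichotomy of options A/B/C from \cite{Fe1} (pp.~458--459): either each of $\cL^s,\cL^u$ is already embedded (A), or $\cL^s$ and $\cL^u$ do not cross each other (B), or some leaf of $\cL^s$ crosses some leaf of $\cL^u$ (C). Lemma 5.6 of \cite{Fe1} says C implies A. Thus if $\cL^s$ self-crossed, option C would have to fail, i.e. $\cL^s$ and $\cL^u$ are disjoint. Then one takes the convex hull of a maximal chain of mutually-crossing leaves of $\wcLs$ in a fixed lift $\wP$; the boundary of the hull projects to compact tori or Klein bottles, which by atoroidality of $P$ and the leafwise-geodesic condition must \emph{be} components of $\partial P$; but Proposition~\ref{prop.notori} excludes this, and moreover such a boundary torus would separate the chain from $\cL^u$, contradicting that both laminations lie in the interior of $P$ and fill it. The two essential ingredients --- the interaction with $\cL^u$ via Lemma 5.6, and the convex-hull/atoroidal-piece argument anchored by Proposition~\ref{prop.notori} --- are absent from your sketch, and without at least one of them the self-crossing scenario cannot be excluded.
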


\begin{proof}{}
We will extensively use the analysis of \cite{Fe1}.
In pages 458 and 459 of \cite{Fe1} we showed that
there are 3 options for the immersed leafwise
geodesic laminations $\cL^s, \cL^u$:
\begin{itemize}
\item {\bf Option A.} \ No leaf of $\cL^s$ transversely
intersects another leaf of $\cL^s$. In other words $\cL^s$ is
an embedded leafwise geodesic lamination.
There is an analogous statement for $\cL^u$.
\item{\bf Option B.} \ No leaf of $\cL^s$ intersects 
transversely a leaf of $\cL^u$.
\item{\bf Option C.} \ There is a leaf of $\cL^s$ 
transversely intersecting a leaf of $\cL^u$.
\end{itemize}

\vskip .1in
In \cite{Fe1} pages 459-464 it is proved that Option C
implies Option A for both $\cL^s$ and $\cL^u$.
This is explicitly stated in Lemma 5.6 of \cite{Fe1}.

Fix a lift $\wP$ of $P$. 

Suppose that one of the laminations is not embedded,
without loss of generality suppose 
that $\cL^s$ self intersects transversely.
We will prove that option C holds in this case.
This will produce a contradiction since Option C implies
Option A for both $\cL^s$ and $\cL^u$.
This will prove that both $\cL^s$ and $\cL^u$ are embedded.

Suppose then that Option C does not hold.
By hypothesis there
are leaves $L, L'$ of $\wcLs$ contained in $\wP$
which intersect transversely.
We consider all leaves $L'$ of $\wcLs$ in $\wP$ so that
there is a sequence $L_0 = L, L_1, ...., L_k = L'$ of
leaves of $\wcLs \cap \wP$ with $L_i$ intersecting $L_{i-1}$
transversely for all $1 \leq i \leq k$.
This forms a subset $\cB$ of leaves of $\wcLs$ in $\wP$.

We consider the convex hull of the set of leaves in $\cB$.
envelope of the these set of leaves.
The boundary of the convex hull is made up of geodesics. When one
varies the leaf in $\wcF$ each boundary component of the 
convex hull varies according to the universal circle of $\cF$.
In addition there are no transverse self intersections
when projeting to $M$.
This is because it is the convex hull of these chains
of consecutily intersecting
leaves. 
In addition 
the projection of the boundary of the convex hull is made up
of compact surfaces, for the same reason,
see Thurston \cite{Th6} or \cite{Fe1}.
Hence these compact surfaces in $M$ are either tori or Klein bottles.
They are contained in $P$, hence have to be homotopic to 
boundary components of $P$.
But since these surfaces intersect leaves of $\cF$ in geodesics
it follows that these surfaces are all components of $\partial P$.
But no leaf of $\cL^u$ in $P$ can intersect any leaf
of $\cL^s$ in $\pi(\cB)$. It follows that some component of the
projection of the boundary of the convex hull separates 
$\pi(\cB)$ from some leaves of $\cL^u$ in $P$. This contradicts
that this projection is a component of $\partial P$. 

The contradiction implies that both $\cL^s$ and $\cL^u$ 
are embedded and finishes the proof of the proposition.
\end{proof}

Fix a piece $P$. Fix a lift $\widetilde P$ to $\mt$.
Let $\cL^s_m$ be a minimal sublamination of $\cL^s$,
and similarly define $\cL^u_m$ both leafwise geodesic
laminations in the interior of $P$.
We now know that they are embedded.
We will obtain properties of $\cL^s_m, \cL^u_m$.
In fact most of the properties 
are proved in \cite{Fe1}.
A {\em crown} is a hyperbolic surface which is a half
open annulus: its completion has one boundary component
which is a closed geodesic. There are
finitely many boundary components.
The  other boundary components are infinite geodesics,
which are consecutively asymptotic $-$ see \cite{Bl-Ca}.

\begin{lemma}\label{lem.properties1}
$\cL^s_m, \cL^u_m$ are not compact leaves. 
$\cL^s_m, \cL^u_m$ are distint and intersect transversely.
The complementary regions of
$\cL^s_m$ (or $\cL^u_m$) in $P$ are either
$S^1$ bundles over open finite sided ideal polygons (generating
open solid tori or solid Klein bottles) or $S^1$ bundles over
crowns, generating sets homeomorphic to torus $\times [0,1)$
or Klein bottle $\times [0,1)$.
\end{lemma}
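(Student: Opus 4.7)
The plan is to derive each assertion from the embedding of $\cL^s, \cL^u$ proved above, the atoroidality of $P$, Proposition \ref{prop.notori}, and the classification of complementary regions of geodesic laminations on complete hyperbolic surfaces. First I would rule out a compact leaf of $\cL^s_m$ or $\cL^u_m$. Any compact leaf $L$ of the embedded lamination $\cL^s$ lifts in $\mt$ to a wall (the $\cU$-saturation of a single geodesic), which is a properly embedded plane; hence $L$ is incompressible in $P$. As $L$ carries the $1$-dimensional geodesic foliation $\cF \cap L$ without singularities, $\chi(L)=0$, so $L$ is a torus or Klein bottle. Since $P$ is atoroidal, $L$ would then be isotopic to a component of $\partial P$, contradicting Proposition \ref{prop.notori}. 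By minimality, no leaf of $\cL^s_m$ or $\cL^u_m$ is compact.

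Next I would establish distinctness and transverse intersection. The laminations $\cL^s$ and $\cL^u$ arise from opposite thinning directions of the distortion parallelepipeds $\cP_i$, so in any leaf $F$ of $\wcF$ a limit geodesic from tops $Z_i$ and a limit geodesic from bottoms $X_i$ have linked ideal-point pairs in $\cU$; this linking descends to the minimal sublaminations, which therefore contain leaves meeting in linked configurations in $F$. To upgrade this to genuine transverse crossings in $M$ I would invoke the trichotomy of Options A, B, C from pages 458--459 of \cite{Fe1}. The embedding result gives Option A for each lamination, and Option B can be excluded by the convex-hull argument used in the previous proposition: if leaves of $\cL^u_m$ never transversely meet leaves of $\cL^s_m$, the convex hull in $\mt$ of $\wcLs_m \cup \wcLu_m$ would have boundary components projecting to compact leafwise geodesic surfaces in $P$, necessarily tori or Klein bottles isotopic to $\partial P$, contradicting Proposition \ref{prop.notori}. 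Thus Option C holds and $\cL^s_m, \cL^u_m$ intersect transversely.

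For the complementary regions I would argue leafwise. Let $W$ be a complementary region of $\cL^s_m$ in $P$ and $\widetilde W$ a lift to $\wP$. For each leaf $F$ of $\wcF$ meeting $\widetilde W$, the trace $\widetilde W \cap F$ is a complementary region of the geodesic lamination $\wcLs_m \cap F$ in the hyperbolic plane. By the classification of complementary regions of geodesic laminations on complete hyperbolic surfaces (see \cite{Bl-Ca}) together with minimality of $\cL^s_m$, such a trace is either an open finite-sided ideal polygon or a crown. The universal circle $\cU$ transports the ideal vertices of this region coherently between leaves, giving $\widetilde W$ a product-like structure over its leaf space in $\wcF$. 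The stabilizer $H$ of $\widetilde W$ inside the subgroup of $\pi_1(M)$ preserving $\wP$ permutes the ideal vertex set, and compactness of $W$ in $P$ (guaranteed by Proposition \ref{prop.notori} away from $\partial P$) forces $H$ to be nontrivial and to contain a finite-index infinite cyclic subgroup acting by cyclic rotation on the vertex set; standard arguments in the spirit of \cite{Fe1} then identify $W$ with an $S^1$-bundle over the $2$-dimensional region, yielding an open solid torus or solid Klein bottle in the polygon case and $T^2\times[0,1)$ or $K^2\times[0,1)$ in the crown case.

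The main obstacle I anticipate is excluding Option B at the level of the minimal sublaminations: the convex-hull construction must be run carefully enough that its boundary projects to a genuine compact leafwise geodesic surface in $P$ to which Proposition \ref{prop.notori} applies, and the resulting boundary surface must be identified as actually a leaf of $\cL^s_m$ or $\cL^u_m$ for the contradiction to be effective. A secondary but nontrivial point is controlling the stabilizer $H$ of $\widetilde W$ in the crown case, where one must rule out that $H$ could be, say, free rank-two by exploiting atoroidality of $P$ once more.
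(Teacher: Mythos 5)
Your first part (ruling out compact leaves) is essentially correct and close in spirit to the paper. The paper gets $\chi(L)=0$ directly from the $1$-dimensional geodesic foliation $\cF\cap L$ as you do, then uses atoroidality to conclude $L$ is isotopic to a component of $\partial P$ and, because $L$ is leafwise geodesic in good position, is actually \emph{equal} to a component of $\partial P$, whence Proposition \ref{prop.notori} gives the contradiction. Your shortcut of citing the ``isotopic'' wording of Proposition \ref{prop.notori} directly is legitimate given how that proposition is stated, so your variant is fine.

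There is, however, a genuine structural gap in your second part. You try to prove ``$\cL^s_m,\cL^u_m$ are distinct and intersect transversely'' inside this lemma, whereas the paper does not do this here --- it is established in Lemma \ref{lem.properties2}, after the complementary region structure is available. More importantly, your argument has a hole: the trichotomy A/B/C and the convex-hull exclusion of Option B give a transverse intersection of a leaf of $\cL^s$ with a leaf of $\cL^u$, but this does \emph{not} automatically descend to the minimal sublaminations. The sentence ``this linking descends to the minimal sublaminations, which therefore contain leaves meeting in linked configurations'' is unsupported: in the limit the linked geodesic pairs can become asymptotic or degenerate, and the intersecting leaves you found need not lie in $\cL^s_m$ or $\cL^u_m$. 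The paper's route avoids this: it first shows $\cL^s_m\ne\cL^u_m$ via the contraction/expansion dichotomy for the action of a boundary leaf's holonomy on the universal circle (Proposition 6.6 of \cite{Fe1}), then uses the already-known complementary region structure of $\cL^s_m$ and $\cL^u_m$ separately to force a transverse crossing. If you want to keep your approach, you would have to show that Option C persists after passing to the minimal sublaminations, which is exactly the point that needs an argument.

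Your third part is the right outline but hand-waves past the actual technical inputs. The paper cites \cite[Lemma 6.3]{Fe1} to get that each boundary component of the $\epsilon$-neighborhood of $\cL^s_m$ in a complementary region is a torus or Klein bottle, and \cite[Proposition 6.1]{Fe1} for the $S^1$-bundle-over-ideal-polygon description; the peripheral case then gives the crown structure. Your concern about the stabilizer of $\widetilde W$ possibly being free of rank two is not where the difficulty lies: the compactness of the $\epsilon$-boundary surfaces (from Fe1 Lemma 6.3, ultimately because $\cL^s_m$ is leafwise geodesic and does not touch $\partial P$) is what controls the stabilizer, and atoroidality of $P$ then sorts out peripheral versus non-peripheral components without needing a separate group-theoretic argument.
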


\begin{proof}{}
We do the proof for $\cL^s_m$.
Suppose that $\cL^s_m$ is a compact leaf $B$. Since it has
a one dimensional foliation, then it is either
a torus or a Klein bottle. 
Since $P$ is atoroidal, then $B$ is isotopic to a component
of $\partial P$. 
Lift $B$ to a cover $\widetilde B$ boundedly isotopic to
a boundary component $S$ of $\widetilde P$. For each
$F$ leaf of $\wcF$ then $\widetilde B \cap F$ and $S \cap F$
are geodesics which are boundedly isotopic. Hence they are
the same geodesic. In other words $B$ is a component of
$\partial P$.
We proved in Proposition \ref{prop.notori}
that this is impossible.
This finishes the proof of the first assertion.

Now consider complementary regions. First of all $\cL^s_m$ is
not a foliation in $P$, since it does not intersect $\partial P$.
For each complementary region $V$ consider the boundary $S$
of the set in $V$ which is $\epsilon$ near $\cL^s_m$.
In \cite[Lemma 6.3]{Fe1} it is proved that each component
of $S$ is either a torus or a Klein bottle. 
If this
complementary region is not peripheral, then the torus
or Klein bottle is compressible and bounds a solid torus
or solid Klein bottle.
Proposition 6.1 of \cite{Fe1} further shows that $V$ is
a $S^1$ bundle over a finite sided ideal polygon.
If the region is peripheral then $S$ is isotopic to a component
$S'$ of the boundary. $S'$ is either a torus or Klein
bottle and $V$ is homeomorphic to $S' \times [0,1)$.
The other boundary components are in annular or M\"{o}bius band
leaves of $\cL^s_m$. As in Proposition 6.1 of \cite{Fe1} there
are finitely many of them, they are asymptotic, leading
to $V$ being an $S^1$ bundle over a crown surface.
This finishes the proof of the lemma.
\end{proof}


\begin{lemma} \label{lem.properties2}
$\cL^s_m, \cL^u_m$ are distint and intersect transversely.
The interior complementary regions of $A = \cL^s_m \cup \cL^u_m$ in $P$
are either finite sided polygons (with compact completion)  times $\rrrr$
or an $S^1$ bundle over a finite side polygon (with compact completion).
The peripheral complementary regions of $A$ have completion
torus or Klein bottle times $[0,1]$. The non peripheral boundary
is made up of compact annuli or M\"{o}bius bands contained
in leaves of $\cL^s_m$ or $\cL^u_m$.
\end{lemma}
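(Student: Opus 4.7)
The statement has three components: (i) distinctness and transversality of $\cL^s_m$ and $\cL^u_m$, (ii) the shape of interior complementary regions of $A = \cL^s_m \cup \cL^u_m$, and (iii) the shape of peripheral regions together with the form of the non-peripheral boundary. Part (i) repeats the second sentence of Lemma~\ref{lem.properties1}, but for emphasis I would sketch the reason: if $\cL^s_m = \cL^u_m$, then both would be universal-circle saturations of a common leafwise geodesic $\ell$, yet $\cL^s$ is constructed as a limit of tops $Z_i$ of parallelepipeds whose thinness goes to zero in one direction, while $\cL^u$ comes from bottoms $X_i$ whose thinness goes to zero in the opposite direction, so the limiting geodesics cannot coincide. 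That they intersect transversely then follows from the trichotomy (Options A, B, C) recalled in the previous proposition together with atoroidality of $P$: if Option B held, the union $\cL^s_m \cup \cL^u_m$ would itself be a single embedded leafwise geodesic lamination, and the convex hull argument used in Proposition~\ref{prop.notori} would produce a torus or Klein bottle leaf, contradicting the first assertion of Lemma~\ref{lem.properties1}.

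For parts (ii) and (iii), the plan is to analyse the complementary regions leaf-by-leaf using the universal circle $\cU$. Fix a lift $\widetilde V \subset \wP$ of a complementary region $V$ of $A$. For every leaf $F$ of $\wcF$ meeting $\widetilde V$, the slice $\widetilde V \cap F$ is a connected component of $F \setminus (\wcLs_m \cup \wcLu_m)$, hence an ideal convex polygon whose sides are geodesic arcs and whose ideal vertices lie on $S^1(F)$. Crucially, each side is carried by a leaf of $\wcLs_m$ or $\wcLu_m$, whose ideal endpoints are determined by a pair of points in $\cU$ independent of $F$; hence the polygon shape is the same in every leaf, and $\widetilde V$ is the bundle of these polygons over an interval in the leaf space of $\wcF$. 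Finiteness of sides comes from Lemma~\ref{lem.properties1}: $\widetilde V$ is contained in a lifted complementary region of $\cL^s_m$ alone, which is an $S^1$-bundle over a finite-sided polygon (interior case) or crown (peripheral case); only finitely many leaves of $\cL^u_m$ can enter such a region, since minimality of $\cL^u_m$ together with the $S^1$-bundle structure forces the entry channels to be permuted by the generator of $\pi_1$ of the $S^1$-fibre, which has finitely many orbits among the finitely many boundary sides of the base. Passing to the quotient $V = \widetilde V / \mathrm{Stab}(\widetilde V)$, the stabiliser is either trivial or infinite cyclic, giving respectively polygon $\times \rrrr$ or an $S^1$-bundle over a finite-sided polygon with compact completion, which is part (ii). For part (iii), the peripheral crown of $\cL^s_m$ is glued along the corresponding peripheral crown of $\cL^u_m$ via their common asymptotic behaviour at the boundary torus or Klein bottle of $P$, yielding completion torus or Klein bottle $\times [0,1]$; the finitely many consecutively asymptotic geodesic sides of both crowns match up in the $S^1$-direction to produce compact annuli or M\"obius bands inside leaves of $\cL^s_m \cup \cL^u_m$, forming the non-peripheral boundary.

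The main obstacle will be establishing the finiteness of channels by which $\cL^u_m$ can cross an $S^1$-bundle complementary region of $\cL^s_m$ (and the corresponding finiteness of sides of the slice polygon). A priori such a region could be sliced by infinitely many leaves of $\cL^u_m$ spiralling around the $S^1$-fibre or accumulating on each other; ruling this out should use minimality of $\cL^u_m$ together with the fact that the slice polygon in each leaf of $\wcF$ is obtained from a fixed finite set of points of $\cU$, so any accumulation in one leaf would persist in all leaves by the universal circle identification and would then contradict embeddedness of $\cL^u_m$. Once this finiteness is in hand, the universal circle identification gives the bundle/product structure uniformly across all leaves, and the peripheral case is completed by matching the asymptotic sides of the two crowns.
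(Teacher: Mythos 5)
Your argument for distinctness of $\cL^s_m$ and $\cL^u_m$ has a real gap. You claim that since $\cL^s$ comes from tops $Z_i$ thin in one direction and $\cL^u$ from bottoms $X_i$ thin in the opposite direction, ``the limiting geodesics cannot coincide.'' But the laminations are built by taking \emph{all} deck translates of the limits and then closing up; there is no a priori obstruction to a translate of a top-limit coinciding with a bottom-limit. The paper's actual mechanism (from Lemma 6.6 of \cite{Fe1}) is dynamical and goes through the boundary leaves of the minimal sublaminations: a non-compact boundary leaf $L'$ of $\cL^s_m$ is an annulus or M\"obius band, and the associated deck transformation $\gamma$ acts on the interval $I\subset\cU$ bounded by the ideal points of $L\cap F$ as a \emph{contraction} with a unique fixed point; the same analysis applied to $\cL^u_m$ yields an \emph{expansion} on the corresponding interval. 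If the laminations coincided these would both apply to the same $\gamma$ and $I$, a contradiction. Your argument uses only the defining limits and never confronts deck translates, so it does not close this door.

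Your approach to transversality (invoking Option~B and a convex-hull torus) is also different from the paper and seems fragile: Option~B does not assert that $\cL^s_m\cup\cL^u_m$ is embedded as a single lamination, and the convex-hull-of-a-single-lamination argument is not obviously applicable to a union of two disjoint minimal laminations that do not share a leaf. The paper's argument is more elementary: once distinctness and minimality force the two laminations to share no leaf, Lemma~\ref{lem.properties1} describes the complementary regions of each separately (solid tori/Klein bottles, or $S^1$-bundles over crowns near $\partial P$), and a minimal $\cL^u_m$ cannot live entirely inside such regions of $\cL^s_m$ without crossing it. For parts~(ii) and~(iii), the paper simply cites Proposition~6.11 of \cite{Fe1} for the interior regions and derives the peripheral description from Lemma~\ref{lem.properties1}; your attempt to reprove this via universal-circle slicing is reasonable in spirit, but you explicitly acknowledge you cannot establish the key finiteness of sides, which is exactly what \cite{Fe1} Proposition~6.11 supplies. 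So the proposal as written leaves both the distinctness step and the finiteness step incomplete.
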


\begin{proof}{}
The proof that they are distinct is exactly as in \cite{Fe1}.
Let $L'$ be a boundary leaf of (say) $\cL^s_m$. Then it is
an annulus or M\"{o}bius band with $\pi_1(L')$ generated by $\gamma$.
Let $L$ be the lift of $L'$ to $\mt$ with $\gamma(L) = L$.
This is analyzed in detail in Lemma 6.6 of \cite{Fe1}. There
the following is proved: 
Let $\theta(\gamma)$ be the action of $\gamma$
on the universal circle $\cU$ of $\cF$, and 
suppose that $\gamma$ is monotone increasing on the leaf space of $\wcF$.
Let $I$ be the open interval in $\cU$ determined by the
ideal points of $L \cap F$ for some fixed 
leaf $F$ of $\wcF$ $-$ under the identification
$S^1(F) \cong \cU$ satisfying the following:
there are leaves $E_n$
in $\widetilde \cL^s_m$ converging to $L$ and with ideal
points of $E_n \cap F$ in $I$ $-$ again under the identification
$S^1(F) \cong \cU$. Recall that $L$ is isolated on one side, but 
not on the other. This is because $\cL^s_m$ is minimal,
but not a compact leaf.
Then Proposition 6.6 of \cite{Fe1} proves that the action
of $\theta(\gamma)$ in $I$ is a contraction with a single fixed
point. In the case of the lamination $\cL^u_m$ the same
proposition shows that $\theta(\gamma)$ acts as an expansion in $I$.
These are contradictory and show that $\cL^s_m$ and $\cL^u_m$
are not the same lamination.

Since both $\cL^s_m, \cL^u_m$ are minimal it follows that
they do not share any leaf. By the properties of the complementary
regions of $\cL^s_m$ and $\cL^u_m$ (separately) it now follows
that they have to intersect transversely.
The components of the intersection
of complementary regions of $A$ with any leaf of $\cF$
have to have compact completion.
The description of the interior complementary regions of
$A$ is done in Proposition 6.11 of \cite{Fe1}.
The description of the peripheral components follows from the
description of the peripheral complementary components of $\cL^s_m$
and $\cL^u_m$ in $P$ separately, done in the previous Lemma.
\end{proof}

\section{One prong pseudo-Anosov flows and blow ups
in atoroidal pieces}
\label{sec.oneprong}

We generalize the notion of pseudo-Anosov flows to
include one prongs:

\begin{definition}
(one prong pseudo-Anosov flows) A flow $\varphi$ in a
closed $3$  manifold $Q^3$ is a one prong topological pseudo-Anosov 
flow if there are no point orbits of $\varphi$ and orbits
of $\varphi$ are contained in a pair of (possibly singular) two
dimensional foliation
$\cE^s, \cE^u$ weak stable and weak unstable of $\varphi$,
satisfing:
\begin{itemize}
\item All flow lines in a leaf of $\cE^s$ are forward 
asymptotic. In the backwards direction the orbits diverge
from each other in the intrinsic metric of the two dimensional
leaves. Similarly for $\cE^u$ with the reversed direction.
\item The (topological) singularities of $\cE^s, \cE^u$
are all of $p$-prong type where $p$ is a positive integer
which can be equal to one. The singular locus is a finite
union of periodic orbits of $\varphi$. The singular locus
of $\cE^s$ is the same as the singular locus of $\cE^u$.
\item The foliations $\cE^s, \cE^u$ are (topologically)
transverse to each other and intersect exactly along the
flow lines of $\varphi$.
\end{itemize}
\end{definition}

\begin{theorem} \label{thm.collapse}
Let $\cF$ be a transversely oriented $\rrrr$-covered
foliation with hyperbolic leaves in a $3$-manifold $M$. Suppose that
there is an atoroidal piece $P$ in the JSJ decomposition
of $M$. Then there is a one prong pseudo-Anosov flow
in a closed manifold $P_*$ obtained from $P$
by collapsing each boundary component of $P$ to a circle.
\end{theorem}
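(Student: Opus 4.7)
The plan is to build the one-prong pseudo-Anosov flow on $P_*$ by blowing down the embedded minimal leafwise geodesic laminations $\cL^s_m, \cL^u_m$ produced by Lemmas \ref{lem.properties1} and \ref{lem.properties2}. First I would construct $P_*$ from $P$ by collapsing each boundary component $T \subset \partial P$ along the natural $S^1$-fibration that the adjacent peripheral complementary region of $A = \cL^s_m \cup \cL^u_m$ induces on $T$. By Lemma \ref{lem.properties2} the completion of such a peripheral region is $T \times [0,1]$ (or Klein bottle $\times [0,1]$), with non-peripheral boundary a union of compact annuli or M\"{o}bius bands lying in leaves of $\cL^s_m \cup \cL^u_m$; these fix a well-defined fiber direction on $T$, and collapsing along this direction produces the desired closed 3-manifold $P_*$.

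Next I would build the weak stable/unstable singular foliations $\cE^s, \cE^u$ on $P_*$. The interior complementary regions of $A$, which by Lemma \ref{lem.properties2} are $S^1$-bundles over finite-sided ideal polygons, are collapsed onto their $S^1$-cores; a $p$-sided polygon then yields a $p$-prong periodic singular orbit. A peripheral complementary region, which by Lemma \ref{lem.properties1} is an $S^1$-bundle over a crown with $k$ infinite boundary geodesics, collapses under the boundary identification to a $k$-prong periodic orbit in $P_*$; the case $k=1$ can occur, and this is precisely why the generalized one-prong definition is required. Away from these collapsed orbits the foliations $\cE^s, \cE^u$ are the images of $\cL^s_m, \cL^u_m$, which are transverse by Lemma \ref{lem.properties2}, so the required transversality of $\cE^s, \cE^u$ holds.

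The flow $\varphi$ is then the oriented one-dimensional foliation $\cE^s \cap \cE^u$, oriented using the transverse orientation of $\cF$ together with a compatible orientation on the intersection of leaves of $\cL^s_m$ and $\cL^u_m$; a continuous non-vanishing tangent vector field along this intersection, obtained by compactness of $P_*$, reparametrizes the foliation as a topological flow. Periodicity of each collapsed orbit is automatic since each core circle is an $S^1$, and by construction $\cE^s \cap \cE^u$ consists of exactly the orbits of $\varphi$.

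The main technical step will be to verify the asymptotic dichotomy in the definition, namely that two orbits in a common leaf of $\cE^s$ are forward-asymptotic and backward-divergent in the intrinsic leaf metric, and symmetrically for $\cE^u$. This is not automatic from the lamination picture and must be extracted from the defining property of the distortion parallelepipeds $\cP_i$ of Proposition \ref{prop.laminations}. Since $\cL^s$ arises from the tops $Z_i$, whose thinness in one direction tends to $0$, pushing a leafwise geodesic of $\wcLs$ forward transversally through $\wcF$ contracts leafwise distances, while pushing it backward expands them; the symmetric statement for $\cL^u$ uses the bottoms $X_i$. The most delicate point will be verifying this behavior uniformly near a one-prong singular orbit, where an entire crown complementary region has been concentrated into a single orbit; this will rest on the control of the $\pi_1(T)$-action on the universal circle established in Propositions \ref{prop.crossing} and \ref{prop.notori}.
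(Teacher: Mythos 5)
Your overall strategy matches the paper's: collapse the leafwise complementary regions of $A = \cL^s_m \cup \cL^u_m$ to obtain singular foliations $\cE^s, \cE^u$ in a closed manifold $P_*$, take the oriented intersection as the flow, and obtain one prongs from ``thin'' peripheral components. However, you skip over the single technical difficulty that occupies nearly all of the paper's argument, and this is a genuine gap. The paper does not simply ``collapse each boundary component $T$ along the natural $S^1$-fibration.'' The construction must be done leafwise: for each leaf $F$ of $\cF|_P$ one collapses closures of components of $F - A$. For interior complementary regions this is fine because the completion is compact leafwise. For a peripheral complementary region $W$ (completion $T \times [0,1]$), the intersection $F \cap \partial W$ with a leaf of $\cF$ generically does not close up: following the induced curve around the non-peripheral boundary of $W$ you may never return to the starting local leaf, and the curve can be dense in that boundary component. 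In that case collapsing components of $F \cap W$ point by point yields a pathological (non-Hausdorff) quotient, and your asserted ``collapses under the boundary identification to a $k$-prong periodic orbit'' is not well-defined. The paper's remedy — choosing a closed curve $\alpha$ in $\cL^s_m \cap \cL^u_m$ on $\partial W$, cutting $P - W$ along a local annulus $C \subset \cL^s_m$ meeting $\alpha$, performing the leafwise collapse on the cut-open manifold, and then regluing the two copies of $C$ so that flow lines match — is exactly what makes the quotient a manifold and the flow a genuine topological flow. Your proposal has no analogue of this step, so the construction as written does not go through.

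A smaller inaccuracy: you propose to extract the forward/backward asymptotic dichotomy directly from the distortion parallelepipeds and from Propositions \ref{prop.crossing}, \ref{prop.notori}. Those propositions control where the laminations live (not crossing or being contained in JSJ tori); they say nothing quantitative about leafwise contraction or expansion. The paper instead obtains the asymptotics by citing Proposition 7.2 of \cite{Fe1}, which applies once the collapse is carried out. Also note that the crown structure you invoke comes from the complementary regions of $\cL^s_m$ alone (Lemma \ref{lem.properties1}); the peripheral complementary regions of $A$ are described in Lemma \ref{lem.properties2} and the prong count of the collapsed orbit is governed by how the boundary annuli of $\cL^s_m$ and $\cL^u_m$ interleave on $\partial W$, not by the number of infinite boundary geodesics of a single crown.
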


\begin{proof}{}
A part of this is 
done carefully in section 7 of \cite{Fe1}, which itself
just follows the constructions of Mosher \cite{Mos1,Mos2}.
There is a problem with the collapsing near each 
component of the boundary of $P$ which we will explain how
to adjust.
Let $A = \cL^s_m \cup \cL^u_m$.
For each leaf $F$ of $\cF$ restricted to $P$ we collapse
every closure of a component of 
$$F - (\cL^s_m \cup \cL^u_m)$$ 
\noindent
not intersecting the
boundary of $P$ to a point.
The laminations $\cL^s_m$ and $\cL^u_m$ collapse
to two dimensional foliations $\cE^s, \cE^u$
in the collapsed set.
Most of these closures of complementry 
regions in $F$ are compact quadrilaterals. Finitely
many of these closures which are also
in the interior of $P$ are finite
sided polygons with compact closure having
$2p$ sides. 
Here $p \geq 3$. The $p$ boundary leaves of $\cL^s_m$
associated with this complementary region collapse
to a $p$-prong singularity of $\cE^s$. Proposition 6.11
of \cite{Fe1} states that for every such complementary 
region of $\cL^s_m$ there is also a complementary
region of $\cL^u_m$ which intersects the leaf $F$ 
in a $p$-sided ideal polygon. So the same complementary
region of $A$ also generates a $p$-prong singularity of
$\cE^u$.

But there is a problem with the peripheral complementary components:
the problem in peripheral components is that the leaves 
of $\cF$ may not intersect them in sets with compact completion.
For the interior components, since $\cL^s_m, \cL^u_m$ fill $P$
then they are either solid tori or solid Klein bottles, and
a leaf $F$ intersects such a component locally in a finite
sided ideal polygon with compact closure in $P$. That is,
if you look at a local leaf $F$ intersecting the boundary,
and go around the boundary, the intersection with $F$ closes
up and bounds a disk in $F$ in the complementary component.
But for a peripheral complementary component $W$ 
look at how a leaf $F$ 
intersects the boundary of $W$ contained 
in $\cL^s_m \cup \cL^u_m$: when
you go around it may not close up. In fact if you continue
going around maybe it will be dense in this boundary 
component of $W$
and the collapsing of points in $F$ will be an awful 
topological space. 

In order to deal with this we do the following:
Let $\alpha$ be a closed curve which is an intersection
of a leaf $L$ of $\cL^s_m$ intersecting the boundary of $W$
with a leaf of $\cL^u_m$ intersecting the boundary of $W$.
Consider a local annulus $C$ in $L$ with one boundary
in $\alpha$ and entering $P - W$. Cut $P - W$ along this 
annulus $C$. Do this for all peripheral complementary 
components. 
Now do the collapsing along closures of intersections
of $F$ leaf of $\cF$ with the complement of $A = \cL^s_m \cup \cL^u_m$..

The laminations $\cL^s_m, \cL^u_m$ project to foliations
in the collapsed set. The curve $\alpha$ is still a closed
curve in the collapsed space. The intersection of the 
laminations $\cL^s_m$ and $\cL^u_m$ is a one dimensional
foliation in this object. It is orientable because the intersection
of $\cL^s_m$ and $\cL^u_m$ was
transverse to the foliation $\cF$ before collapsing.
Therefore the collapse of this intersection induces a flow.
We orient the flow
going in the negative direction transverse to the
foliation. Finally  glue the two sides of the opened
up annulus $C$ so that flow lines glue to flow lines.
The quotient is a closed manifold $P_*$ with two induced
foliations $\cE^s, \cE^u$. 

As proved in Proposition 7.2 of \cite{Fe1} orbits in the
same leaf of $\cE^s$ are forward asymptotic and in
a leaf of $\cE^u$ they are backwards asymptotic.
Hence this flow is a one prong pseudo-Anosov flow.
The reason for the possible one prongs is because of the
peripheral components of $P$: the boundary of a component
$W$ as above in the interior of $P$ may be made up of
a single annulus or M\"{o}bius band in $\cL^s_m$ and
a single annulus or M\"{o}bius band in $\cL^u_m$.
Then the collapsing will fold over the two sides of the
annulus or M\"{o}bius band leaves producing a one pronged
leaf for each of $\cE^s$ and $\cE^u$  in the quotient.
This situation is in fact extremely common: for example
consider the suspension case where $\cF$ is a fibration
over the circle and the monodromy has a pseudo-Anosov 
component. The pseudo-Anosov map associated with this monodromy
may have one prongs when collapsing a boundary component
to a point. This is exactly the same that happens here.

This finishes the proof of the theorem.
\end{proof}

\begin{figure}[ht]
\begin{center}
\includegraphics[scale=1.00]{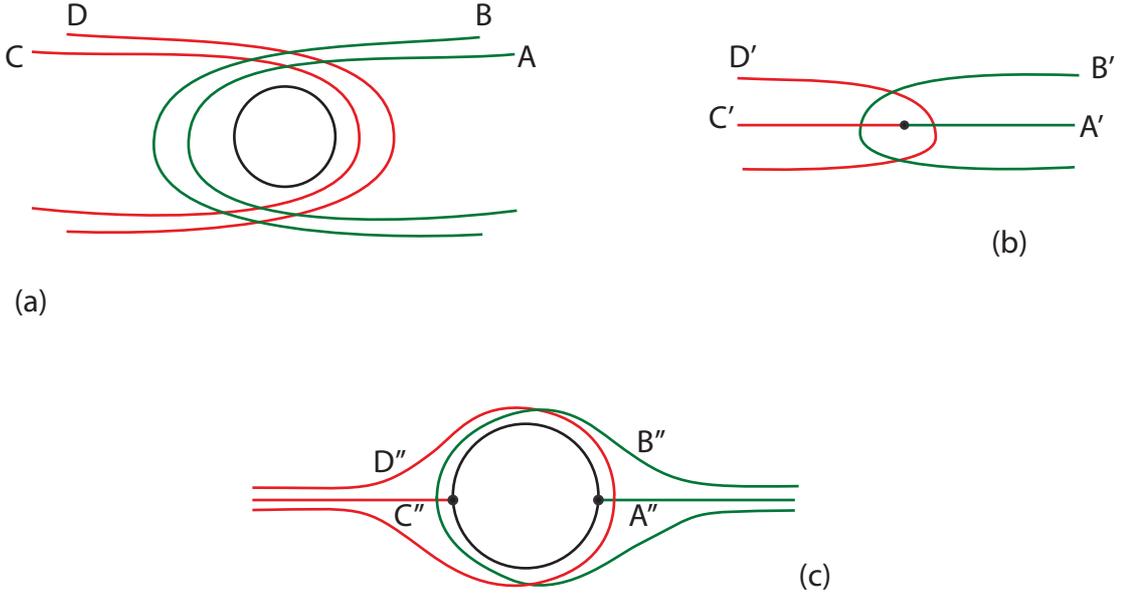}
\begin{picture}(0,0)
%
%
\end{picture}
\end{center}
\vspace{0.0cm}
\caption{Figures of the objects in the construction.
\ \ a) Depicts leaves of the laminations
$\cL^s_m, \cL^u_m$. The leaves $A, C$ are extremal,
there is a path from boundary of $P$ to these leaves not
intersecting any other leaf of the appropriate lamination.
\ \ b) The corresponding figure in $P_*$. The leaf $A$ of the lamination
collapses to a one prong leaf (this is the situation of a one prong), 
denoted
by $A'$ in this figure. The leaf $C$ collapses to $C'$. $A', C'$
intersect locally in the one prong depicted.
\ \ c) The blow up. The singularity blows up to a torus. $A'$ blows
up to the leaf $A"$ intersecting the boundary in a circle, 
similarly $C'$ blows to $C"$.}
\label{figure3}
\end{figure}

\subsection{Blow ups of one pronged pseudo-Anosov flows}

We now do blow ups of one pronged pseudo-Anosov flows
following Fried's method \cite{Fr}.
Consider a periodic orbit $\alpha$ of a one pronged pseudo-Anosov
flow in a manifold $Q$. The return map of flow lines
in a local a cross section satisfies the following:
in a prong of
the weak stable 
leaf of $\alpha$ is conjugated to $x \mapsto \frac{1}{2} x$.
In a weak unstable leaf it is conjugated to $x \mapsto 2 x$.
This obviously is only topological conjugation.
Fried \cite{Fr} blew up the orbit $\alpha$ to its projective
tangent bundle. The derivative of the return map
above induces a flow in the blown up set. This extends the
flow in $Q - \alpha$ to the blown up set with boundary.

Now return to the situation of Theorem \ref{thm.collapse}.
Given the construction of the laminations $\cL^s_m, \cL^u_m$
and the collapsed one prong pseudo-Anosov flow in $P^*$
the next result follows immediately.

\begin{corollary} \label{cor.regul1}
Under the hypothesis of Theorem
\ref{thm.collapse} the blow up of the one pronged pseudo-Anosov
flow in $P_*$ produces a flow $\Phi$ in $P$ which is
transverse to $\cF$ restricted to $P$ and which is regulating
for $\cF$ restricted to $P$.
\end{corollary}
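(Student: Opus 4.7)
The plan is to verify that Fried's blow-up, applied to the one-prong pseudo-Anosov flow constructed in Theorem \ref{thm.collapse}, reverses the collapsing procedure in a controlled way and that transversality and the regulating property can be tracked through the collapse/blow-up.

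First I would identify the orbits to be blown up. In the construction of $P_*$, every peripheral complementary region $W$ of $A = \cL^s_m \cup \cL^u_m$ (of the form torus or Klein bottle times $[0,1)$ by Lemma \ref{lem.properties2}) gets collapsed: after the auxiliary cut along the annulus $C$, each leaf of $\cF \cap W$ collapses to a single flow line and the family of these flow lines descends to a single periodic orbit $\alpha_S$ of the pseudo-Anosov flow on $P_*$. The orbit $\alpha_S$ is the one-prong orbit associated with the boundary component $S$ of $P$. Fried's projective tangent bundle blow up at each such $\alpha_S$ produces a manifold-with-boundary, and the derivative of the return map (a $\tfrac12$–$2$ hyperbolic map in appropriate coordinates) extends the flow to the new boundary torus/Klein bottle. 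Because the collapse near $W$ was precisely by identifying boundary rectangles of $\cL^s_m$ and $\cL^u_m$ along annular boundary leaves, the blown-up manifold is canonically identified with $P$ itself, and the resulting flow $\Phi$ extends the interior flow (the one coming from the one-dimensional intersection foliation $\cL^s_m \cap \cL^u_m$) to all of $P$.

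For transversality, the interior case is immediate: on $P - \partial P$ the flow $\Phi$ is (up to collapsing trivial intervals in leaves of $\cF$) the flow tangent to $\cL^s_m \cap \cL^u_m$, and both laminations are leafwise geodesic, hence transverse to $\cF$. On each blown-up boundary torus or Klein bottle $S$, the flow is tangent to $S$, and $S$ is itself transverse to $\cF$ since $S$ is in good position (Section \ref{ss.jsj}), so $\cF \cap S$ is a one-dimensional foliation on $S$; the blow-up places the flow direction transverse to this foliation because the derivative of the return map is hyperbolic with contracting direction in $\cE^s$ and expanding direction in $\cE^u$, which project to the two foliations $\cF \cap S$ and the orbit foliation of $\Phi|_S$.

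For the regulating property I would lift to $\mt$ and work in a fixed lift $\widetilde P$. In the interior, each flow line of $\widetilde \Phi$ is an intersection $L_s \cap L_u$ with $L_s \in \wcLs$, $L_u \in \wcLu$. Each of $L_s, L_u$ is a wall, that is, a properly embedded plane meeting every leaf $F$ of $\wcF \cap \widetilde P$ in a single geodesic, and these two geodesics in $F$ intersect transversely (by the transversality of $\cL^s_m$ and $\cL^u_m$), so $L_s \cap L_u \cap F$ is a single point; hence every interior flow line meets every leaf of $\wcF \cap \widetilde P$ exactly once. For a boundary flow line in a lifted boundary component $\widetilde S \subset \partial \widetilde P$, the good position of $S$ ensures $\widetilde S \cap F$ is a single geodesic $\eta_F$; the flow $\Phi|_S$ is transverse to $\cF \cap S$, so in $\widetilde S$ each lifted flow line meets each $\eta_F$ in exactly one point, hence meets each leaf $F$ of $\wcF \cap \widetilde P$ in one point.

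The step I expect to be the main subtlety is the transversality and regulating on the blown-up boundary tori, because it requires that the hyperbolic linearization near $\alpha_S$ supplied by Fried's construction is compatible with the foliation $\cF$ in the sense that the two invariant directions of the return map project to the foliation $\cF \cap S$ and to a transverse direction on $S$. This compatibility is not automatic from Fried's abstract construction, but follows in our setting because the stable and unstable prongs at $\alpha_S$ arise as collapses of the boundary leaves of $\cL^s_m$ and $\cL^u_m$ that are themselves transverse to $\cF$, so when we blow up we can choose the identification of the projective tangent circle with $\partial W$ so that the $\cF$–direction is preserved; once this is arranged the interior regulating argument extends across the boundary by continuity.
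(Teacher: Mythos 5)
Your overall structure is the right one, and it matches what the paper has in mind (the paper itself offers no real argument, saying the corollary ``follows immediately'' from the constructions in Theorem \ref{thm.collapse} and the preceding blow-up discussion). Two steps, however, are stated in a way that does not actually hold, and the argument needs to be repaired at those points.

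First, you assert that ``in the interior, each flow line of $\widetilde\Phi$ is an intersection $L_s\cap L_u$ with $L_s\in\wcLs$, $L_u\in\wcLu$.'' That is false: after the collapse and blow-up the orbit foliation fills in the complementary regions of $\cL^s_m\cap\cL^u_m$, so a dense set of flow lines is a wall intersection, but the flow lines lying inside a complementary solid tube are not. Your regulating argument only covers the wall intersections. The fix is to observe that each complementary region of $\widetilde{\cL^s_m}\cap\widetilde{\cL^u_m}$ in $\widetilde P$ is a solid tube whose boundary is made of pieces of walls; since walls meet every leaf of $\wcF\cap\widetilde P$, the tube does as well, and the flow restricted to the tube is a suspension over the compact polygonal cross section, so every flow line in the tube also meets every leaf.

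Second, on a blown-up boundary torus $\widetilde S$ you write that $\Phi|_S$ ``is transverse to $\cF\cap S$, so in $\widetilde S$ each lifted flow line meets each $\eta_F$ in exactly one point.'' Transversality gives at most one crossing of each $\eta_F$, but not at least one: a flow line transverse to a foliation of a torus can fail to be regulating in the presence of Reeb annuli. To get surjectivity onto the leaf space you must use that \emph{both} one-dimensional foliations on $S$ are Reeb-free --- $\cF\cap S$ because $\cF$ is Reebless and $T$ is incompressible, and the orbit foliation of $\Phi|_S$ because the closed orbits produced by Fried's blow-up are essential in $S$ --- from which the standard product structure of two transverse Reeb-free foliations of a torus gives the claim. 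You also identify the blown-up manifold with $P$ somewhat loosely; the honest statement is that the collapsing $P\to P_*$ is a cellular map and one \emph{chooses} a flow on $P$ semiconjugate to the blown-up flow which is smooth transverse to $\cF$, rather than claiming a canonical identification. You correctly flag the compatibility of Fried's projectivized blow-up with $\cF\cap S$ as the delicate point, and your resolution of it is the right one.
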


As remarked above the singular foliations $\cE^s, \cE^u$ in $P_*$
may have one prong singularities. The blown up objects in $P$ are
denoted by $\cE^s_b, \cE^u_b$. They are foliations in the interior
of $P$ but are not foliations in the boundary. The are only points
in the boundary that are parts of leaves are those that come from
the blow up of the leaves in $P_*$ that originated from
collapsing the boundary of $P$. There only finitely many
of these in $\cE^s_b$ and in $\cE^u_b$. For example if
$\cE^s$ has a one prong in $\gamma$, then the blow up
of $\gamma$ will be a torus intersecting leaves of $\cE^s_b$
in a single component. This is illustrated in figure \ref{figure3}.
The figure shows the 3 steps in the process: Figure (a) depicts
the laminations $\cL^s_m, \cL^u_m$. These do not fill $P$, a local
cross section intersects each in a Cantor set. Figure (b) shows the
singular foliations $\cE^s, \cE^u$ in $P_*$. Figure (c) shows
the blow up of figure (b). The blow up of the foliations 
$\cE^s, \cE^u$ is a foliation in the interior of $P$, but not
on the boundary.

\subsection{Density of regular periodic orbits of $\Phi$ in $P$.}

We denote by $\Phi_*$ the one prong pseudo-Anosov flow in $P_*$.
The closed orbits of $\Phi_*$ which are obtained by collapsing
components of $\partial P$ are called the 
{\em boundary collapsed orbits}. These are exactly the orbits
that may be one prong orbits of the one prong pseudo-Anosov flow
$\Phi_*$.

\begin{proposition} \label{prop.dense}
The flow $\Phi_*$ is transitive, that is,
the set of periodic orbits of $\Phi_*$ is dense.
The same is true of the blow up flow $\Phi$ in $P$.
In fact, the set of regular periodic orbits of $\Phi$ in $P$
is dense in $P$.
\end{proposition}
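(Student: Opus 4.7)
The plan is to adapt the standard proof that pseudo-Anosov flows are transitive with dense periodic orbits, and then transfer the conclusion through the blow-up. The first observation is that the two singular foliations $\cE^s, \cE^u$ on $P_*$ are minimal: each leaf of $\cE^s$ is the image under the collapsing map of a leaf of the minimal leafwise geodesic lamination $\cL^s_m$, together with some singular and boundary-collapsed orbits absorbed by the collapse. By minimality of $\cL^s_m$ every such leaf is dense in $\cL^s_m$, hence its image is dense in $P_*$, and the analogous statement holds for $\cE^u$.

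From minimality, transitivity of $\Phi_*$ is immediate: given nonempty open sets $U, V \subset P_*$, choose a regular point $p \in U$; the backward orbit of $p$ lies in a dense leaf of $\cE^u$, so it meets $V$, giving a forward orbit from $V$ into $U$. Next I will invoke a standard pseudo-Anosov closing-lemma argument to upgrade transitivity to density of periodic orbits. Take a regular $x \in P_*$ and a small flow box $B$ adapted to the local product structure $\cE^s \pitchfork \cE^u$. By transitivity the forward orbit of $x$ returns to $B$ arbitrarily close to $x$; the local product structure together with contraction along $\cE^s$ and expansion along $\cE^u$ then produces a small rectangle in a cross section mapped across itself by the return map, which must contain a fixed point, and so a periodic orbit of $\Phi_*$ close to $x$. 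Since regular points are dense (the singular locus being a finite union of periodic orbits) this gives density of periodic orbits of $\Phi_*$ in $P_*$.

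Finally, transferring to $\Phi$ is routine. Away from the finitely many boundary-collapsed orbits of $\Phi_*$, the blow-up $\Phi$ on $P$ is canonically identified with $\Phi_*$, so every periodic orbit of $\Phi_*$ in this regular locus corresponds to a periodic orbit of $\Phi$ in the interior of $P$, and any such orbit is automatically regular for $\Phi$ (its prong type is inherited from the interior $p$-prongs with $p \geq 3$). Density of periodic orbits of $\Phi_*$ in $P_*$ therefore gives density of regular periodic orbits of $\Phi$ in $P$ minus the blow-up boundary tori; since these tori are accumulated from the interior by the same regular periodic orbits, density holds in all of $P$. The hardest step is the closing-lemma one: one must verify that the standard pseudo-Anosov shadowing argument, which is purely local and lives in a regular flow box, is insensitive to the presence of the one-prong singular orbits that distinguish $\Phi_*$ from a classical pseudo-Anosov flow. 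This is expected since one-prongs form a finite set and the argument takes place in flow boxes disjoint from them, but it is the step that requires the most care.
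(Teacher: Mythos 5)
Your route is genuinely different from the paper's, and it contains two real problems.

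\textbf{Comparison of approaches.} The paper deliberately avoids proving dynamical results directly for one-prong pseudo-Anosov flows: it explicitly notes that ``many results do not hold for one prong pseudo-Anosov flows,'' performs Fried surgery on the boundary-collapsed orbits to obtain a genuine ($p\geq2$) pseudo-Anosov flow $\Phi_2$ on a closed manifold $P_2$ almost equivalent to $\Phi_*$, invokes Mosher's theorem that a non-transitive pseudo-Anosov flow admits an incompressible transverse torus separating basic sets, and then rules out such a torus by combining atoroidality of $P$ (so the torus is peripheral and bounds a solid torus containing a surgery orbit) with the fact that weak stable leaves of a pseudo-Anosov flow lift to properly embedded planes, which cannot live in a solid tube with compact cross-section. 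Transitivity of $\Phi_2$ then pulls back to $\Phi_*$ and $\Phi$. You instead try to argue minimality of $\cE^s, \cE^u$ directly, deduce transitivity, and then run a closing-lemma argument. Your minimality claim is essentially correct (the collapsing map is continuous and surjective from $\cL^s_m$ onto $P_*$, so a dense leaf maps to a dense leaf), but the next two steps have gaps.

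\textbf{Gap 1: the transitivity argument is a non sequitur.} You write ``the backward orbit of $p$ lies in a dense leaf of $\cE^u$, so it meets $V$.'' The backward orbit is a one-dimensional curve inside a two-dimensional dense leaf; density of the leaf says nothing about density of a single orbit within it. To get topological transitivity from minimality of the unstable foliation you need the standard, but not one-line, argument involving forward saturation of a local strong-unstable piece and showing the resulting forward-invariant open set is dense; and even that is usually phrased for flows with uniform hyperbolicity rather than for the purely topological one-prong objects at hand.

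\textbf{Gap 2: the closing lemma for one-prong topological pseudo-Anosov flows is not established.} You yourself flag this as ``the step that requires the most care,'' and the concern is well founded. The definition the paper gives for one-prong pseudo-Anosov flows asserts only qualitative asymptotics (forward asymptotic in $\cE^s$, divergent backward), not uniform rates, and there is no citable closing lemma for this class. This is precisely the obstruction the paper is engineered to avoid: by surgering to $\Phi_2$, the paper can use established facts about genuine pseudo-Anosov flows (Mosher's spectral decomposition and the properly-embedded-plane property of \cite{Mos1,Ga-Oe}) and then transfer back through almost equivalence, so that no dynamical argument ever has to be run on a one-prong flow. If you want to keep your direct approach you would need to either prove a closing lemma for one-prong pA flows or mimic the paper's surgery first and then apply the closing lemma to $\Phi_2$, at which point the argument is essentially the paper's.
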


\begin{proof}{}
If necessary lift to a double cover so that $P^*$
(and hence $P$ is orientable).

We will use a result about pseudo-Anosov flows. However the
flow $P_*$ may have one prongs and in general many results
do not hold for one prong pseudo-Anosov flows. 
To get around that we will do Fried's surgery. 
More specificlly do Fried's surgery 
on the boundary collapsed 
orbits to obtain $p$ prong orbits with $p > 1$.
Fried's surgery \cite{Fr} has two steps: blow up the
orbit using the action on the tangent bundle, then blow down
using a new meridian. 
The blow up is the procedure to go from
the one prong pseudo-Anosov flow $\Phi_*$ in $P_*$ 
to the blown up one prong
pseudo-Anosov flow $\Phi$ in $P$. 
The blow down is chosen by a new choice
of a meridian, call it $m$. If the intersection number of
$m$ with the collection of the blow ups of the
prongs of $\Phi_*$ is $p$, then
the resulting orbit is a $p$-prong orbit. One can always do
this so that $p \geq 2$, resulting in a pseudo-Anosov flow
$\Phi_2$ in a closed manifold $P_2$. The manifold $P_2$ is
obtained by Dehn surgery on $P_*$ on the curves associated with
the boundary collapsed orbits of $\Phi_*$.

What is important is that the flows $\Phi_*$ in $P_*$
and $\Phi_2$ in $P_2$ are what is called {\em almost equivalent}
\cite{De-Sh}:
$P_*$ minus the boundary collapsed orbits is the same as $P_2$ minus
the union  of the Dehn surgery orbits; and the flows restricted
to these sets have the same orbits.
Hence if the set of periodic orbits of $\Phi_2$ in $P_2$ is
dense in $P_2$ the same is true for the flow $\Phi_*$ in $P_*$.

We prove the result for $\Phi_2$ in $P_2$. Suppose that $\Phi_2$
is not transitive. Then Mosher \cite{Mos1} proved that there is
an incompressible torus $T$ transverse to the flow $\Phi_2$ and which
separates basic sets of $\Phi_2$. This transverse torus does not 
intersect periodic orbits of $\Phi_2$.
The blow up produces a torus $T'$ in $P$ transverse to the
blown up one prong pseudo-Anosov flow $\Phi$ in $P$. 
Since $P$ is atoroidal then $T'$ is isotopic to a boundary
component of $P$.
Projecting to $P_2$, it follows that  the torus
$T$ that bounds a solid torus
$B$ in $P_2$ containing a periodic orbit $\alpha$ associated with the 
blow down of the corresponding boundary component of $P$.
This orbit is obtained by a Dehn surgery of a possible one
prong orbit of $\Phi_*$ in $P_*$.

For simplicity assume the flow is outgoing from $B$ along
$T_2$.
Then the weak stable leaf of $\alpha$ cannot intersect $T_2$ $-$ since
$T^2$ is outgoing from $B$, and so this stable leaf 
is entirely contained in $B$. A lift of $B$ to the universal
cover $\widetilde P_2$ of $P_2$ is a solid tube with bounded cross section.
The lift $E$ of the weak stable leaf of $\alpha$ is a $p$-prong leaf
which is  properly
embedded in $\widetilde P_2$ \cite{Mos1,Ga-Oe}. This uses that
$\Phi_2$ is a pseudo-Anosov flow. This is not true in general
if there are one prongs. This is why we did the surgery
in $P_*$ to obtain $P_2$ and a pseudo-Anosov flow
$\Phi_2$ in $P_2$.
But $E$ is contained in a solid torus with compact cross sections,
so this is impossible.

We conclude that this is impossible. Hence the flow $\Phi_2$
is transitive and so is the flow $\Phi_*$ in $P_*$.
In particular $\Phi$ is also transitive in $P$. Since
there are finitely many possibly singular orbits of $\Phi$ in
$P$, then the union of the regular periodic orbits is
dense in $P$.

This finishes the proof of the proposition.
\end{proof}

\begin{remark}
After this proposition one may ask the following: there
is a lot of freedom in the initial collapsing map from $P$
to $P_*$ (collapsing the laminations $\cL^s_m, \cL^u_m$ 
to the singular foliations $\cE^s, \cE^u$ respectively).
Topologically the type of the singular 
foliations is determined by
the new meridian, which determines the topological type
of the collapsing. Hence why consider one prong pseudo-Anosov flows
in $P_*$ instead of always choosing a collapsing that yields
a true pseudo-Anosov flow? This is a valid question.
Here is one important reason to consider one prong pseudo-Anosov flows:
Suppose that in each component $D$ of $\partial P$ the foliation
$\cF$ induces a foliation by circles in $D$. 
For example this happens if the foliation in $\cF$ is a foliation
by compact surfaces, that is $\cF$ is a fibration over the
circle in $P$. Then the preferred collapsing is the one
that collapses each circle of $\cF_{| D}$ to a point. This
yields a foliation in $P_*$ which is transverse to the
induced flow $\Phi_*$, so $\Phi_*$ is a suspension flow.
But clearly the suspension flow $\Phi_*$ may have one prongs.
In this way the theory generalizes the theory of pseudo-Anosov
homeomorphisms of compact surfaces with boundary: it is well
known that pseudo-Anosov homeomorphisms with one prongs in
the boundary are extremely common and cannot be disregarded.
For example $\mathbb S^2 \times \mathbb S^1$ has a one
prong pseudo-Anosov flow which is a suspension of a one
prong pseudo-Anosov homeomorphism of $\mathbb S^2$. Obviously
$\mathbb S^2$ does not admit pseudo-Anosov homeomorphisms,
but admits one prong pseudo-Anosov homeomorphisms, for
example there is one with exactly 4 singularities, all
one prongs. The blow up of the suspension is a sphere
minus $4$ disks times $\mathbb S^1$.
There are also many other situations where the foliation 
restricted to each boundary component is by circles,
but the foliation in $P$ may not even have compact leaves.
Whenever this happens, the natural collapsing produces
a foliation in $P_*$ transverse to the flow $\Phi_*$.
\end{remark}

\subsection{Regulating flows transverse to $\rrrr$-covered foliations}

\begin{theorem} \label{thm.regul2}
Suppose that $\cF$ is a Reebless, $\rrrr$-covered,
transversely oriented foliation. Then $\cF$ admits a
transverse regulating flow. 
There is such a flow which is essentially flowing along
Seifert fibers in the Seifert pieces and is the
blow up one prong pseudo-Anosov flow in the atoroidal pieces.
\end{theorem}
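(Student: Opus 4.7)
The plan is to combine, piece by piece, the transverse regulating flow on each atoroidal component given by Corollary \ref{cor.regul1} with an analogous transverse regulating flow on each Seifert fibered piece, and then to match the two families along the JSJ tori so that they glue to a global transverse flow.

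First I would treat the Seifert pieces. Let $S$ be a Seifert piece and $T$ a boundary component of $S$. By the discussion of section \ref{ss.jsj}, $T$ is in good position, so $\widetilde T \cap F$ is a geodesic in every leaf $F$ of $\wcF$ whose endpoints are constant under the universal circle identification. In particular $\cF$ induces a $1$-foliation on $T$ of a well-defined slope in $H_1(T;\rrrr)$. Using this control along $\partial S$ together with the $\rrrr$-covered hypothesis, one can isotope the Seifert fibration of $S$ (in the sense of Brittenham, exactly as is done for the atoroidal case in \cite{Fe1}) so that the Seifert fibers are transverse to $\cF$ restricted to $S$; orient them consistently with the transverse orientation of $\cF$. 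Lifting to $\widetilde S \subset \mt$, the resulting flow lines are properly embedded lines, and since leaves of $\wcF \cap \widetilde S$ are separating and quasi-isometric to the leaves of $\wcF$, each such line meets every leaf. This gives a transverse regulating flow $\Phi_S$ in $S$.

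Next I would arrange that on each JSJ torus or Klein bottle $T$ the flows induced from the two adjacent pieces agree. On $T$ the induced $1$-foliation $\cF \cap T$ has fixed slope, so any transverse flow on $T$ is specified up to isotopy by a slope in $H_1(T;\rrrr)$ off that line together with a sign. On the atoroidal side the blow-up construction of Corollary \ref{cor.regul1} restricts on a collar $T \times [0,\epsilon)$ of $T$ to a product flow of some definite slope; on the Seifert side it is the chosen Seifert direction. The sign is already pinned down on both sides by the transverse orientation of $\cF$, so the remaining slope ambiguity lies in a connected parameter space, and a straight line isotopy of flows in a collar neighborhood of $T$ on each side (keeping transversality to $\cF$ throughout, which is an open condition) matches the two slopes. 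Performing this simultaneously along all JSJ surfaces and extending by the identity outside collars yields a well-defined transverse flow $\Phi$ on $M$.

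Finally I would verify that $\Phi$ is regulating. Work in $\mt$: a lifted flow line $\widetilde \gamma$ crosses a discrete collection of lifts $\widetilde T_j$ of JSJ surfaces, and between consecutive crossings it lies in a lift of either a Seifert or an atoroidal piece, where the piecewise flow is regulating. Each $\widetilde T_j$ meets a given leaf $F$ of $\wcF$ in a single geodesic (section \ref{ss.jsj}), so the pieces of $F$ obtained by cutting along the $\widetilde T_j$ are in bijection with pieces along $\widetilde \gamma$, and chaining together the piecewise regulating property shows $\widetilde \gamma$ meets $F$. The main obstacle is the gluing step in the preceding paragraph: one must ensure that adjusting the flows in collar neighborhoods of JSJ surfaces neither destroys transversality to $\cF$ nor the regulating property. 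Transversality is an open condition maintained throughout the collar isotopy, and the regulating property is insensitive to such bounded-support perturbations because it is an asymptotic statement about intersections of properly embedded lines with leaves of $\wcF$ in the universal cover.
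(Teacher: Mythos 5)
Your proposal follows the paper's structural outline for the main case, but it skips a case the paper must and does treat first. Theorem~\ref{thm.regul2} assumes only that $\cF$ is Reebless, $\rrrr$-covered and transversely oriented; it does \emph{not} assume that leaves can be made hyperbolic. The paper therefore begins by disposing of the case where $\cF$ admits a holonomy invariant transverse measure: either a compact leaf exists and is a fiber, or by Tischler's theorem $M$ fibers over the circle and the foliation is approximated by a fibration whose suspension flow is regulating. Only after ruling this out does it invoke Candel's theorem to equip the leaves with hyperbolic metrics. Everything you use --- good position of the JSJ surfaces, the universal circle, Corollary~\ref{cor.regul1} --- presupposes hyperbolic leaves, so you have silently substituted the hypotheses of the Main Theorem for the weaker hypotheses of Theorem~\ref{thm.regul2}. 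That is a genuine gap: without first eliminating the transversely measured case, the machinery you cite is not available.

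In the non-measured case your plan matches the paper's: Seifert fiber flow on Seifert pieces, the blown-up one-prong pseudo-Anosov flow of Corollary~\ref{cor.regul1} on atoroidal pieces, and an interpolation across $T\times[0,1]$ collars of the JSJ surfaces using the fact that both induced flows on $T$ have slope in the same open half-space of slopes transverse to $\cF_{|T}$. The paper additionally separates out the trivial-JSJ cases ($M$ closed Seifert via Brittenham's theorem, or $M$ atoroidal via \cite{Fe1}), which your gluing degenerates to correctly. One imprecision: your closing remark that the regulating property is ``insensitive to bounded-support perturbations'' is not by itself a valid justification --- regulating is not an open property of transverse flows, and a careless interpolation could trap a flow line in a collar. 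The actual justification is the chaining argument you give earlier: in $\mt$ each segment of a flow line lies in a lift of a piece or a collar, on each of which the flow regulates the restricted foliation, and the crossings of the lifts $\widetilde T_j$ are transverse and discrete, so the flow line meets every leaf. That argument is correct and is in fact slightly more explicit than what the paper writes.
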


\begin{proof}{}
We first deal with the case that $\cF$ admits
a holonomy 
invariant transverse measure.
See Definition.9.2.14 of \cite{Ca-Co} for a definition
of such a measure.
If $\cF$ has a compact leaf (hence clearly generating
a holonomy invariant transverse measure supported on it),
then we proved in \cite{Fe1} that
the compact leaf is a fiber. In addition
there is a suspension flow transverse to the foliation
which is regulating for $\cF$, see \cite{Fe1}.
Suppose that $\cF$ does not have compact leaves.
Then $\cF$ has
a unique minimal set and one can blow down complementary regions,
to produce a minimal foliation, see \cite{Fe1}.
Since the 
foliation is minimal the holonomy invariant measure has full support.
By a result of Tischler \cite{Tic} (see also
Theorem 9.4.2 of \cite{Ca-Co}),  it follows that $M$ fibers over
the circle. In addition the foliation can be approximated 
arbitrarily well by a fibration over the circle. Suspension flows
to the fibrations are also regulating for the foliation
$\cF$, see \cite{Th3}. This finishes the proof in this case.

From now on assume that there is no holonomy invariant transverse
measure. 
By Candel's theorem \cite{Cand} there is a metric in $M$
making all leaves hyperbolic, and we assume this metric.

If the JSJ decomposition of $M$ is trivial then $M$ is either
Seifert fibered or atoroidal. Consider the case that
$M$ is Seifert. By Brittenham's theorem \cite{Br} the foliation 
$\cF$ has a sublamination which is either vertical
or horizontal. If it is vertical then $\cF$ cannot be
$\rrrr$-covered. It follows that the sublamination is horizontal
and then so is $\cF$. Hence we can assume that
the Seifert
fibration is transverse to the foliation. Hence it is orientable,
so its lift to $\mt$ shows that the flow generated by
the Seifert fibration is regulating.
If $M$ is atoroidal the result is proved by the construction in 
\cite{Fe1}.

Assume from now on that the JSJ decomposition of $M$ is not trivial.
Using the results of section \ref{sec.background}
we assume that all decomposing tori and Klein bottles
of the JSJ decomposition are in good position with respect
with the foliation $\cF$.
Let $P$ be a piece of the JSJ decomposition.
If $P$ is Seifert the Seifert fibration provides
a transverse flow regulating for $\cF_{|P}$.
If $P$ is atoroidal Corollary \ref{cor.regul1} provides
a blow up of a one prong pseudo-Anosov flow in $P$ which
is transverse to $\cF_{|P}$ and regulating.

Now all one has to do is to match the flows in between
the pieces. Suppose that $T$ is a torus or Klein bottle
of the JSJ decomposition. 
On either side of $T$ there are pieces $P_1, P_2$ of
the JSJ decomposition with flows transverse to the
foliations $\cF_{|P_1}$ and $\cF_{|P_2}$ respectively.
Hence there are two flows in $T$ which are regulating
for $\cF_{T}$. 

We do an isotopy between  these flows in $T$ through flows transverse to
$\cF_{|T}$ and regulating for this foliation.
We do the case where $T$ is a torus, which is the 
more complicated case.
Choose a basis for the homology of $T$. All the foliations in $T$:
either $\cF_{|T}$ or the two foliations by flow lines do not
have Reeb components. Hence all the leaves in any of these foliations
have the same slope. The slope of $\cF_{|T}$ defines a half
line of slopes positively transverse to $\cF_{|T}$. Both 
the flows induced in $T$ are in this half space of positive slopes.
Then one can isotope one to the other keeping it in this half
space of positive slopes and keeping it regulating for $\cF_{|T}$.
Then enlarge $T$ to $T \times [0,1]$ and interpolate the flows
from one side to the other.

This constructs a flow transverse to $\cF$ and regulating for
$\cF$. This finishes the proof of the Theorem.
\end{proof}

\section{Action of the fundamental group on the universal 
circle}

Let $\cF$ be a transversely orientable, Reebless, $\rrrr$-covered
foliation in a closed $3$-manifold $M$ so that its leaves are
hyperbolic. We obtain some information about the action of 
deck transformations on the universal circle $\cU$ of $\cF$.

This builds up on several prior works: for example on actions
of lifts of homeomorphisms of closed surfaces on the
circle at infinity (see an account in the appendix of \cite{BFFP}).
The case of atoroidal manifolds, has already been worked out
previously, see \cite{Fe2}.

We first work out one specific example which is the direct
consequence of the results of this article.
First assume that $M$ has a non trivial JSJ decomposition.
Let $\varphi$ be a regulating flow as construted in Theorem 
\ref{thm.regul2}. We can assume that $\varphi$ preserves the
tori and Klein bottles of the JSJ decomposition. 
If there is
an atoroidal piece $P$ we may assume that $\varphi$ restricted
to $P$ is $\Phi$: a blow up of a one prong pseudo-Anosov flow.

Let $\widetilde \varphi$ the lift to $\mt$. 
Given any two leaves $E, F$ of $\wcF$ define 

$$\tau_{E,F}: \ E \  \rightarrow \ F, \ \ 
\tau_{E,F}(x) \ = \ \widetilde \varphi_{\mathbb R}(x) \cap F$$

\noindent
Since $\varphi$ is regulating for $\cF$ the maps $\tau_{E,F}$
are always homeomorphisms. They clearly satisfy a cocycle
condition.
Let $\wT$ be a lift to $\mt$ of a torus or Klein bottle in the
JSJ decomposition. Clearly $\tau_{E,F}(\wT \cap E) = \wT \cap F$.

If $M$ is Seifert or atoidal there is a transverse regulating
flow, so this defines a map $\tau_{E,F}$ for any $E, F$.

\begin{lemma} \label{lem.same}
For any $E, F$, the homeomorphism $\tau_{E,F}$ extends to
a homeomorphism, $\zeta_{E,F}: E \cup S^1(E) \rightarrow 
F \cup S^1(F)$. In addition for any $x$ in $S^1(E)$,
$x$ and $\zeta_{E,F}(x)$ define the same point in the
universal circle $\cU$ of $\cF$.
\end{lemma}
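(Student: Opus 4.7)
The plan is to take $\zeta_{E,F}$ to agree with $\tau_{E,F}$ on $E$ and with the canonical universal circle identification $\tau_\infty$ on $S^1(E)$, so by definition $p$ and $\zeta_{E,F}(p)$ represent the same point of $\cU$. Since $\tau_\infty$ is already a homeomorphism of circles and $\tau_{E,F}$ is a homeomorphism of planes, the only content of the lemma is continuity of this combined map at $S^1(E)$; the inverse case then follows by the cocycle property using $\tau_{F,E}$.

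The key property I would exploit is that, in the proof of Theorem \ref{thm.regul2}, $\varphi$ is constructed so that every JSJ cutting torus or Klein bottle $T$ is $\varphi$-invariant, so that each JSJ piece and each of its lifts $\wP$ to $\mt$ is $\widetilde{\varphi}$-invariant, giving $\tau_{E,F}(E\cap \wP)=F\cap \wP$. Any wall $W$ (saturation of a geodesic by the universal circle, as defined just before Proposition \ref{prop.laminations}) that is $\varphi$-invariant inside $\wP$ therefore satisfies $\tau_{E,F}(W\cap E \cap \wP) = W\cap F\cap \wP$, and by the definition of saturation the ideal endpoints of $W \cap E$ in $S^1(E)$ are identified under $\zeta_{E,F}$ with the ideal endpoints of $W\cap F$ in $S^1(F)$.

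To upgrade this preservation into continuity I would use three families of $\varphi$-invariant walls: lifts $\wT$ of JSJ cutting surfaces, which are walls by Proposition 4.4 of \cite{FP1}; inside each atoroidal piece, lifts of weak stable and weak unstable leaves of the one-prong pseudo-Anosov flow $\Phi_*$ pulled back through the blow-up, whose geodesic traces in leaves of $\wcF$ are dense by Proposition \ref{prop.dense}; and inside each Seifert piece, lifts of leaves of a codimension-one foliation of the base orbifold transverse to the Seifert fibers. Given $p\in S^1(E)$ and $x_n \to p$, applying the continuous inverse $\tau_{F,E}$ first rules out that any subsequential limit of $\tau_{E,F}(x_n)$ lies in $F$, so the only alternative is a subsequential limit $q \in S^1(F)$. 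If $q \neq \zeta_{E,F}(p)$, I would pick preserved walls whose geodesic traces in $E$ have ideal points separating $p$ from the $\zeta_{E,F}$-preimage of a small neighborhood of $q$; the corresponding geodesic traces in $F$ then separate $q$ from $\tau_{E,F}(x_n)$ for large $n$, yielding a contradiction.

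The main obstacle will be guaranteeing density of preserved walls along every arc of $S^1(E)$: a point $p$ may be accumulated only by translates of a single piece, or sit in a direction not hit transversely by any JSJ surface, so one must locate the relevant piece $\wP$ near $p$ and invoke Proposition \ref{prop.dense} in the atoroidal case or the base foliation in the Seifert case, and then carry out the squeeze argument uniformly across the JSJ boundaries.
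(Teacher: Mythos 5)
Your overall strategy for the case of a nontrivial JSJ decomposition is essentially the paper's: you push $\varphi$-invariant walls across leaves, observe that $\tau_{E,F}$ preserves the ideal endpoints of those walls, and use density plus order-preservation on $S^1(E)$ to force a homeomorphic extension that agrees with the universal circle identification. However, the paper does this with only one family of walls — the lifts $\wT$ of JSJ cutting surfaces — and settles your main worry about density by citing Lemma 4.8 of \cite{FP1}: the ideal points of the geodesic lamination $\cG_F = \bigcup \wT \cap F$ are dense in $S^1(F)$, and moreover every nondegenerate interval of $S^1(F)$ contains a leaf of $\cG_F$ with both endpoints inside it. That separating statement is exactly what your squeeze argument needs; without it you are forced into the auxiliary families you propose, and those introduce real gaps.

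Specifically, the appeal to Proposition \ref{prop.dense} is a non sequitur: density of regular periodic orbits of $\Phi$ in $P$ does not by itself give density of the ideal endpoints of stable/unstable walls in every relevant subinterval of $S^1(E)$ — that would require a separate argument relating periodic orbits to directions at infinity in a leaf. And in a Seifert piece there is no canonical codimension-one foliation of the base orbifold transverse to the fibers whose lifts are walls in the sense of this paper (saturations of leafwise geodesics under the universal circle); that family is unspecified and not obviously available. The paper sidesteps both issues: when the JSJ decomposition is trivial it treats the Seifert and atoroidal cases separately by much more direct means. In the Seifert case $\varphi$ is chosen so that $\tau_{E,F}$ is an isometry in the universal cover, hence extends to the compactifications automatically and moves geodesic rays boundedly, so ideal endpoints agree in $\cU$. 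In the atoroidal case the leaves of $\cL^s_m,\cL^u_m$ are $\varphi$-invariant walls by construction (via distortion quadrilaterals), and their ideal endpoints are constant in $\cU$ by definition; the extension follows. So the correct fix for your proposal is: restrict the wall-separating argument to the nontrivial JSJ case and invoke Lemma 4.8 of \cite{FP1} for density; handle trivial JSJ by the two direct arguments above.
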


\begin{proof}{}
Consider first the case that $M$ is Seifert. Then we choose the
transverse flow $\Phi$ so that it is an isometry between 
the leaves in the universal cover, so it extends
as a homeomorphism between compactifications.
The foliation is uniform
and the map $\tau_{E,F}$ sends a point to a point boundedly
near it, so a geodesic ray to a geodesic ray boundedly near it.
Hence the ideal points in $E, F$ project to the same point in
the universal circle. If $M$ is atoroidal, then \cite{Fe1,Cal1}
proved that there is a transverse pseudo-Anoosov $\Phi$ to
$F$. This is constructed using the universal circle, with
the distortion quadrilaterals. In particular a leaf $L$ 
of $\cL^s_m, \cL^u_m$ satisfies that the ideal points of
$L \cap E$ as $E$ varies in $\wcF$ define the same point
in $\cU$. This implies the result in this case.

Finally suppose that $M$ has a non trivial JSJ decomposition,
so there is at least one torus or Klein bottle $T$ JSJ cutting 
surface. Here we need more information from \cite{FP1}.
Let $\wT$ be a lift of $T$ to $\mt$. Proposition 4.4 of \cite{FP1}
shows that $\wT \cap E$ as $E$ varies in $\wcF$ defines
a constant pair of points in $\cU$. 
Given $F$ in $\wcF$ let $\cG_F$ be the lamination in $F$ obtained
by intersecting all lifts $\wT$ of JSJ tori or Klein bottles $T$
with $F$. It is a lamination by geodesics.
Lemma 4.8 of \cite{FP1} states given $F$ in $\wcF$ then the
set of ideal points of $\cG_F$ is dense in $S^1(F)$
and for any non degenerate interval $J$ of $S^1(F)$ there are leaves
of $\cG_F$ with both ideal points in $J$.

Now fix $E, F$ leaves of $\wcF$. We know that for any $\wT$ lift
of a JSJ torus or Klein bottle then $\tau_{E,F}(\wT \cap E) =
\wT \cap F$. 
No two leaves of $\cG_E$ share an ideal point.
In addition the circular order in $S^1(E)$ induced by the
ideal points of leaves of $\cG_E$ is preserved by $\tau_{E,F}$:
the circular order induced in $S^1(F)$ by $\tau_{E,F}(\wT)$ 
as $\wT$ varies over the lifts is the same, when one identifies
$S^1(E)$ with $S^1(F)$ using the universal circle.

We know that for any ideal point $q$ in $S^1(E)$ 
either it is an ideal point of some leaf of $\cG_E$ or
is accumulated by ideal points of leaves of $\cG_E$ (so that
both endpoints converge to $q$).
These facts imply that $\tau_{E,F}$ extends to a homemorphism
from $E \cup S^1(E)$ to $F \cup S^1(F)$.
Since the homeomorphism satisfies that $q, \tau_{E,F}(q)$
project to the same point in $\cU$ for any $q$ ideal point
of leaf of $\cG_E$, it follows that this is true for all
$q$ in $S^1(E)$.
This proves the lemma.
\end{proof}

We fix a transverse, regulating flow $\varphi$ as above.
Let $\gamma$ in $\pi_1(M)$ be a deck transformation.
For any $E$ leaf of $\wcF$ define 

$$h_E \ = \ \gamma \circ \tau_{E,\gamma^{-1}(E)}$$

\noindent
This is a homeomorphism from $E$ to itself. By 
Lemma \ref{lem.same}
this induces a homeomorphism $\hi$ from $S^1(E)$ to itself.
Recall that $\tau_{E,\gamma^{-1}(E)}$ induces the identity
map in the universal circle level.
It follows that under the identification of $\cU$ with $S^1(E)$ 
then $\hi$ is the representation of the action of $\gamma$ 
on the universal circle $\cU$.

Suppose that there is at least one atoroidal piece $P$.
Recall the ``singular foliations" 
$\cE^s_b, \cE^u_b$ in $P$  (they are 
singular foliations in the interior of $P$).
Consider the lift of these to a lift $\wP$ of $P$ to $\mt$.
They induce foliations in $\wP$ so that intersected
with any leaf $E$ of $\wcF$ they are foliations by
quasigeodesics. Some are $p$-prong leaves, each prong is
a quasigeodesic. The transverse flow $\widetilde \varphi$ 
exponentially expands length along the unstable leaves 
and contracts along stable leaves.

Let $\gamma$ be an arbitrary element of $\pi_1(M)$ and
$\rho(\gamma)$ the induced action on the universal
circle $\cU$ of $\cF$.

\begin{proposition} \label{prop.superattracting}
Suppose that $M$ has an atoroidal piece $P$.
Let $\gamma$ be a deck transformation associated with an
interior periodic orbit of $P$. Then up to a finite power
$\rho(\gamma)$ has finitely many fixed points in $\cU$,
alternating between attracting and repelling.
In case $\gamma$ is associated with a regular orbit, then
$\rho(\gamma)$ (up to power) has exactly $4$ fixed points.

Finally if $E$ is in $\wcF$ and $q$ is an ideal point in
$S^1(E)$ associated with a fixed point of $\rho(\gamma)$,
the following happens: there is a neighborhood basis of 
$q$ in $S^1(E)$ defined by geodesics $\ell_i$ in $E$ so that
for any $x$ in  $\ell_i$ and 
$y$ in $\gamma \circ \tau_{E,\gamma^{-1}(E)}(\ell_i)$,
then $d_E(x,y) \rightarrow \infty$ if $i \rightarrow \infty$.
\end{proposition}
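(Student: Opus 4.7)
The plan is to realize the fixed points of $\rho(\gamma)$ on $\cU$ as ideal endpoints of the lifted separatrices of $\widetilde\alpha$, and then to use the exponential expansion/contraction of $\widetilde\varphi$ along prongs to establish the last displayed claim, from which the other assertions of the proposition will follow.

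First I set up the candidate fixed points. Let $\alpha$ be the closed orbit of $\Phi$ in the interior of $P$ corresponding to $\gamma$, let $\widetilde\alpha \subset \wP$ be a lift whose stabilizer is generated by $\gamma$, and let $p \geq 1$ be the number of stable prongs at the image of $\alpha$ in $P_*$, so that $\widetilde\alpha$ has $p$ stable and $p$ unstable separatrices in $\cE^s_b$ and $\cE^u_b$; the regular case is $p=2$. After replacing $\gamma$ by a finite iterate we may assume each of these $2p$ separatrices $\Sigma$ is $\gamma$-invariant setwise. Any leaf $E$ of $\wcF$ meets $\Sigma$ in a quasigeodesic ray from $o := \widetilde\alpha \cap E$ with an ideal endpoint $q_\Sigma(E) \in S^1(E)$, since $\cE^s_b \cap E$ and $\cE^u_b \cap E$ are quasigeodesic foliations. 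Because $\Sigma$ is invariant under both $\gamma$ and $\widetilde\varphi$, Lemma~\ref{lem.same} shows that the various $q_\Sigma(E)$ represent a single point of $\cU$ independent of $E$, and that this point is fixed by $\rho(\gamma)$. This produces $2p$ fixed points in $\cU$, arranged in a circular order that alternates between stable and unstable type since the separatrices themselves alternate around $\widetilde\alpha$.

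The heart of the argument is the last displayed claim; once established, it will force the alternation of attracting/repelling and exclude any further fixed points. Fix such a $\Sigma$, write $r = \Sigma \cap E$, and consider geodesics $\ell_i \subset E$ whose ideal endpoints converge to $q = q_\Sigma(E)$ from the two adjacent sides of $r$ in $S^1(E)$. The exponential expansion of $\widetilde\varphi$ along unstable leaves (or contraction along stable leaves), composed with the definition $h_E := \gamma \circ \tau_{E,\gamma^{-1}(E)}$ and the $\gamma$-invariance of $\Sigma$, implies that $h_E$ preserves $r$ setwise and that $d_E(x, h_E(x)) \to \infty$ whenever $x \in r$ with $d_E(o, x) \to \infty$. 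A standard fellow-traveling argument in hyperbolic geometry then shows that as $i \to \infty$, $\ell_i$ stays uniformly close to $r$ in a neighborhood of a point $f_i \in r$ with $d_E(o, f_i) \to \infty$, and similarly $h_E(\ell_i)$ stays close to $r$ near $h_E(f_i)$. Since $d_E(f_i, h_E(f_i)) \to \infty$, it follows that every pair $x \in \ell_i$, $y \in h_E(\ell_i)$ satisfies $d_E(x, y) \to \infty$.

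From this super-attracting/repelling behavior the remaining conclusions are immediate: in each arc of $\cU$ between two consecutive fixed points from the $2p$ identified, one endpoint attracts and the other repels, which forbids any interior fixed point and forces the alternation. Hence $\rho(\gamma)$ has exactly $2p$ fixed points in $\cU$, reducing to $4$ when $\alpha$ is regular. The main technical difficulty is the quantitative step, namely translating exponential expansion or contraction of $\widetilde\varphi$ along a prong into unbounded hyperbolic displacement along $r$ in $E$. This requires combining the regulating property of $\Phi$ with the quasigeodesic comparability of the prong parametrization and the intrinsic hyperbolic arc length on $r$, ensuring that the exponential divergence in flow coordinates is not absorbed by the change of metric.
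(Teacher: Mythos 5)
Your proposal follows the same broad strategy as the paper: identify the candidate fixed points as ideal endpoints of the prongs of $\widetilde\alpha$, use the $\widetilde\varphi$-invariance plus Lemma~\ref{lem.same} to see they are fixed on $\cU$, and then use exponential expansion/contraction along prongs to establish the super-attracting estimate and the alternation. However, there is a genuine gap in the final quantitative step, and the choice of the family $\ell_i$ is where it originates.

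You take the $\ell_i$ to be arbitrary geodesics whose ideal endpoints converge to $q$ from the two sides of $r$, and you then argue that since $\ell_i$ passes through a point $f_i\in r$ with $d_E(o,f_i)\to\infty$ and $h_E(\ell_i)$ passes through $h_E(f_i)$ with $d_E(f_i,h_E(f_i))\to\infty$, it follows that \emph{every} $x\in\ell_i$ and $y\in h_E(\ell_i)$ satisfy $d_E(x,y)\to\infty$. This last inference is not valid in $\mathbb{H}^2$ without control on the angle at which $\ell_i$ (respectively $h_E(\ell_i)$) crosses $r$: two disjoint geodesics that cross a third geodesic at points arbitrarily far apart can nonetheless come within bounded distance of each other if they cross at angles tending to $0$ (e.g.\ geodesics sharing a nearby ideal endpoint). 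The paper avoids this by choosing the $\ell_i$ to be the geodesics obtained by pulling tight $h^i_E(\zeta)$ for a fixed \emph{leaf} $\zeta$ of $\cE^s_b\cap E$, so that the $\ell_i$ are leaves of $\widetilde{\cL^s_m}\cap E$ and the crossing of $r_g$ (the pulled-tight unstable prong, a leaf of $\widetilde{\cL^u_m}\cap E$) has angle uniformly bounded below by compactness of the pair of transverse laminations. This angle bound is the missing ingredient; once it is in place, far-apart intersection points on $r_g$ do force global divergence of $\ell_i$ from $\ell_{i+1}$, and the boundedness of the Hausdorff distance between $\ell_{i+1}$ and $h_E(\ell_i)$ transfers the estimate. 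Relatedly, your deduction that there are no further fixed points and that the $2p$ fixed points alternate cannot rest on the super-attracting estimate alone, since that is a statement about a neighborhood basis of $q$ and says nothing about the dynamics deep inside a complementary interval; the paper instead establishes directly that the nested family $h^i_E(\zeta)$ sweeps out all of a complementary interval $I$ as $i$ ranges over $\mathbb Z$, giving a contraction of the interior of $I$ to the single point $b_1$, from which alternation and finiteness follow.
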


\begin{proof}{}
Fix a leaf $E$ of $\wcF$, we do the analysis in $E$. 
Let $\alpha$ be the periodic orbit associated with $\gamma$
and $\widetilde \alpha$ the lift of $\alpha$ fixed by
$\gamma$. Up to a power $\gamma$ fixes the stable and
unstable prongs of $\widetilde \alpha$. Let 
$z = \widetilde \alpha \cap E$. Let $\wP$
be the lift of $P$ containing $\widetilde \alpha$. 
Then the intersections of the stable and unstable prongs
of $\widetilde \alpha$ with $E$ are quasigeodesic rays
in $E$ entirely contained in $\wP$.
These prongs are contained in  leaves of 
$\widetilde \cE^s_b, \widetilde \cE^u_b$ respectively.

Fix one unstable prong $r$, that is, contained in 
$\cE^u_b(\widetilde \alpha) \cap E$. 
Fix a regular stable leaf $\zeta$
(that is a leaf of $\cE^s_b \cap E$)
intersecting $r$. Recall that $\hE  = \gamma \circ \tau_{E,\gamma^{-1}(E)}$.
Consider $\hiE(\zeta)$. The flow $\varphi$ preserves 
$\cE^s_b, \cE^u_b$, contracts length exponentially
along the stables and expands
along the unstables. Hence $\hE$ preserves the foliations
by stables and unstables in $\wP \cap E$. 
Then $\hiE(\zeta)$ converges to the stable leaf of $z$
when $i \rightarrow -\infty$  (if
$z$ is a $p$-prong it converges to properly embedded real line
in this leaf). Let this limit be $\zeta'$.
In addition $\hiE(r)$ escapes in $E$ when
$i \rightarrow \infty$. Notice that for 
any $i$, $\hiE(\zeta)$ is entirely contained in $\wP$.
In addition $\hiE(\zeta)$ are uniform quasigeodesics, independent
of $i$. As $i \rightarrow \infty$ they escape in $E$ and
are nested so they converge to a single ideal point, which
is the ideal point of $r$. 

Let $a_1, a_2$ be the ideal points in $S^1(E)$ of $\zeta'$ and
$b_1$ the ideal point of $r$. Let $I$ be the interval in $S^1(E)$
with endpoints $a_1, a_2$ and containing $b_1$.
The above shows that $h_{\infty}$ fixes $a_1, a_2, b_1$ and
acts as a contraction on the interior of $I$ with single
fixed point $b_1$. This proves the first assertion of the
proposition.

\vskip .1in
We now consider the last statement of the proposition.
Consider the geodesics which are obtained by pulling
$\hiE(\zeta)$ tight. These are leaves of $\widetilde \cL^s_m \cap E$.
These form a neighborhood basis of the ideal point $b_1$ of
$r$ in  $E \cup S^1(E)$.
These geodesics are also a uniform bounded distance in $E$ from 
$\hiE(\zeta)$ (for the same $i$) and so are their images
under $\hE$. 

The angle between leaves of $\cL^s_m \cap E$ and
leaves of $\cL^u_m \cap E$ is uniformly bounded below.
Let $\ell_i$ be the geodesic obtained by pulling
$\hiE(\zeta)$ tight. Let $r_g$ be the geodesic obtained
by pulling $r$ tight. The action of $\hE$ on $r$ expands
length exponentially. 
This follows because $\varphi$ expands length
exponentially. The geodesics $\ell_i$ are a bounded
distance from $\hiE(\zeta)$ and so is $r_g$ from $g$.
It follows that the distance from $\ell_i \cap r_g$
to $\ell_{i+1} \cap r_g$ converges to infinity as
$i \rightarrow \infty$. 
The angle condition implies that for any $x$ in $\ell_i$ and
$y$ in $\ell_{i+1}$ then $d_E(x,y) \rightarrow \infty$
as $i \rightarrow \infty$. Since the Hausdorff distance
between $\ell_{i+1}$ and $\hE(\ell_i)$ is uniformly
bounded, we obtain the bound desired.

If the ideal point is an ideal point of a stable prong,
then we use inverses instead in the above argument.

This finishes the proof of the proposition.
\end{proof}

\begin{remark} The last condition in the proposition is 
what is called {\em super attracting} in \cite{FP2}.
The super attracting definition is particularly useful
in the case that $\cF$ is uniform. Since we do not use
that here we do not define it formally, and we refer
the interested reader
to \cite{FP2}.
The specific result of this proposition is used in
\cite{FP2} to help analyze partially hyperbolic diffeomorphisms
homotopic to the identity in $3$-manifolds and to prove that
some of them are not dynamically coherent.
\end{remark}

\subsection{Some general properties of action on
the universal circle $\cU$.}

We are now ready to describe more general
information about the action
$\rho(\gamma)$ on the universal circle $\cU$. There are
too many cases to enumerate in the statement of a single
result. Instead we little by little describe 
each individual case.
The foliation $\cF$ satisfies the properties announced
in the beginning of this section.

\vskip .1in
\noindent
1) Suppose that $M$ is Seifert. As explained above we can assume
that the Seifert fibration is transverse to $\cF$, and we can put
a metric so that flowing along Seifert fibers is a local isometry
between leaves of $\cF$. Any deck transformation preserves
the Seifert fibration so induces an isometry on the quotient
of $\mt$ by the lift of the Seifert fibration. This quotient $R$
is isometric to the hyperbolic plane and the ideal circle of
this plane is canonically identified to the universal circle
$\cU$. Let $\gamma$ be a deck transformation, so it induces
an isometry of $R$.

$-$ If the isometry is elliptic then $\gamma$ is associated with
a fiber of the Seifert fibration. Then a finite power of $\gamma$ is
the identity on $R$. If $\gamma$ preserves orientation
then the action of $\rho(\gamma)$ on $\cU$ is either free or fixes every point.
If $\gamma$ reverses orientation on $R$ then $\rho(\gamma)$ has two fixed
points on $\cU$.

$-$ If the isometry induced on $R$ is hyperbolic there are exactly two fixed
points of $\rho(\gamma)$ on $\cU$.

$-$ The isometry on $R$ cannot be parabolic, because $M$ quotient
the Seifert fibration is a closed orbifold surface.

\vskip .1in
\noindent
2) Suppose that $M$ is atoroidal. Then the results of \cite{Fe1}
imply that there is a pseudo-Anosov flow $\Phi$ transverse to $\cF$
and regulating for $\cF$. The action of elements of $\pi_1(M)$
on $\cU$ was determined by Proposition 5.3 of \cite{Fe2}.
If $\gamma$ fixes 3 or more points of $\cU$ then $\gamma$
is associated with a periodic orbit of $\Phi$ and $\rho(\gamma)$
has a finite even number of fixed points on $\cU$, which
are alternatively attracting and repelling. If $\gamma$ is
not associated with a periodic orbit of $\Phi$, then $\rho(\gamma)$
has exactly two fixed points on $\cU$, one attracting, one
repelling. 
Finally it could be that $\gamma$ is associated
with an orbit of $\Phi$, but permutes the local prongs.
In this case $\rho(\gamma)$ acts freely on $\cU$, but a power
of $\rho(\gamma)$ fixes at least 4 point on $\cU$.

\vskip .1in
\noindent
3) Finally suppose that the JSJ decomposition of $M$ is
non trivial. As in section \ref{sec.background}
we put the JSJ tori and Klein bottles
in good position with respect to the foliation $\cF$. 
Let $\cT$ be the JSJ tree of $M$: the vertices are
lifts of pieces of the JSJ decomposition
of $M$, an edge is a lift of a torus or Klein bottle of the JSJ
decomposition which connects two lifts of pieces
of the JSJ decomposition.
This tree has a more or less canonical 
embedding into any leaf $F$ of $\wcF$ (modulo moving vertices
in complementary regions, and isotoping edges) preserving the
ordering $-$ see details in section 4 of \cite{FP1}.
The universal circle is canonically homeomorphic to a quotient
of the set of ends of this tree, see \cite{FP1}.

There are many possibilities. 

3.A)  If $\gamma$ acts freely on the tree $\cT$, then $\gamma$ does not
fix any lift of a piece. Then $\rho(\gamma)$ has exactly two fixed
points on $\cU$.

3.B) Suppose that $\gamma$ fixes a lift $\widetilde P$ of a
Seifert piece $P$. 
As in case 1) above $\gamma$ induces an isometry of the quotient
of $\widetilde P$ by the lift of the Seifert fibration, which is
an isometry of a surface embedded in the hyperbolic plane,
but with infinitely many geodesic boundaries. As in case 1),
$\gamma$ could be elliptic, with $\rho(\gamma)$
either not fixing any point on $\cU$
or $\rho(\gamma)$ fixing at least a Cantor set of points in $\cU$,
or exactly 2 points on $\cU$ (if $\rho(\gamma)$ reverses orientation).
See similar analysis in the Appendix of \cite{BFFP}.
If the action of $\gamma$ on the quotient
surface is hyperbolic then $\rho(\gamma)$ 
fixes exactly 2 points on $\cU$.

3.C) Finally if $P$ is atoroidal, look at the action
of $\gamma$ on the leaf spaces of the lifts of the blow ups
of $\cE^s, \cE^u$
as constructed in section \ref{sec.background}.
If these actions on the leaf spaces
are free, then $\gamma$ acts as a translation
on these leaf spaces and $\rho(\gamma)$ fixes exactly two points
on $\cU$, one attracting, one repelling.
If $\gamma$ fixes some leaf of the blow up of
$\cE^s$ then it is associated
with a periodic orbit of the blown up one prong pseudo-Anosov
flow in $P$. If the orbit is in the interior of $P$ (not
peripheral), then a power of $\rho(\gamma)$ fixes finitely many
($\geq 4$) points in $\cU$, which are alternatively attracting
and repelling. If the associated orbit is peripheral, then
$\rho(\gamma)$ fixes infinitely many points on $\cU$.
This is Proposition \ref{prop.superattracting}.


\begin{thebibliography}{2}


\bibitem[BFFP]{BFFP} T. Barthelme, S. Fenley, S. Frankel, R. Potrie,
	{\em Dynamical incoherence for a large class of partially hyperbolic diffeomorphisms}, arxiv.math.2002.10315, to appear in Erg. Th. Dyn. Sys.
	




\bibitem[Be-Pe]{Be-Pe} R. Benedetti and C. Petronio, {\em Lectures on hyperbolic
	geometry}, Universitext, Springer, 1992.

\bibitem[Bl-Ca]{Bl-Ca} S. Bleiler and A. Casson, {\em Automorphisms of surfaces after Nielsen and Thurston}, Cambridge Univ. Press, 1988.
	

\bibitem[Br]{Br} M. Brittenham, {\em Essential laminations in Seifert
	fibered spaces}, Topology {\bf 32} (1993) 61-85.

\bibitem[Bu-Iv]{Bu-Iv} D. Burago and S. Ivanov, {\em Partially hyperbolic diffeomorphisms of $3$-manifolds with abelian fundamental group}, J. Mod. Dyn. {\bf 2} (2008) 541-580.

\bibitem[Cal1]{Cal1} D. Calegari, {\em The geometry of $\rrrr$-covered foliations}, Geometry and Topology {\bf 4} (2000) 457-515.
	
\bibitem[Cal2]{Cal2} D. Calegari, {\em Foliations and the geometry of $3$-manifolds}, Oxford University Press, (2007).

\bibitem[Ca-Du]{Ca-Du} D. Calegari, N. Dunfield, {\em Laminations and groups of homeomorphisms of the circle}, Inven. Math. {\bf 152} (2003) 149-204.

\bibitem[Cand]{Cand} A. Candel, {\em Uniformization of surface laminations}, Ann. Sci. \'{E}cole Norm. Sup. {\bf 26} (1993) 489-516.

\bibitem[Ca-Co]{Ca-Co} A. Candel, L. Conlon, {\em Foliations I and II}, Graduate Studies in Mathematics, 60. American Mathematical Society, Providence, RI, (2000) xiv.402pp and (2003) xiv.545pp.

\bibitem[De-Sh]{De-Sh} P. Dehornoy and M. Shannon, {\em Almost equivalence of algebraic Anosov flows}, math.arxiv.1910.08457.


\bibitem[Fe1]{Fe1} S. Fenley, {\em Foliations, topology and geometry of 
$3$-manifolds: $\rrrr$-covered foliations and transverse pseudo-Anosov flows}, Comm. Math. Helv. {\bf 77} (2002) 415-490.

\bibitem[Fe2]{Fe2} S. Fenley, {\em Rigidity of pseudo-Anosov flows transverse to $\rrrr$-covered foliations}, Comm. Math. Helv. {\bf 88} (2013) 643-676.

\bibitem[FP1]{FP1} S. Fenley and R. Potrie, {\em Mimimality of the action on the universal circle of uniform foliations}, arxiv.math.2001.05522.

\bibitem[FP2]{FP2} S. Fenley and R. Potrie, {\em Partial hyperbolicity and pseudo-Anosov dynamics}, in preparation.





\bibitem[Fr]{Fr} D. Fried, {\em Transitive Anosov flows and pseudo-Anosov
	maps}, Topology {\bf 22} (1983) 299-303.


\bibitem[Ga-Oe]{Ga-Oe} D. Gabai and U. Oertel, {\em Essential laminations and
	$3$-manifolds}, Ann. of Math. {\bf 130} (1989) 41-73.

\bibitem[Gr]{Gr} M. Gromov, {\em Hyperbolic groups}, in Essays in group theory, 75--263, Math. Sci. Res. Inst. Publ. {\bf 8} (1987) Springer, New York.


\bibitem[He]{He} J. Hempel, {\em 3-manifolds}, Ann. of Math. Studies {\bf 86},
	Princeton Univ. Press, 1976.
	
\bibitem[Ja]{Ja} W. Jaco, {\em Lectures on three-manifold topology}, C.B.M.S.
	from A.M.S. {\bf 43}, 1980.

\bibitem[Ja-Sh]{Ja-Sh} W. Jaco and P. Shalen, {\em Seifert fibered spaces 
	in 3-manifolds}, Memoirs A.M.S. {\bf 220}, 1979.

\bibitem[Ka-Le]{Ka-Le} M. Kapovich, B. Leeb, {\em $3$-manifold groups and nonpositive curvature}, Geom. Funct. Anal. {\bf 8} (1998) 841-852.

\bibitem[Mos1]{Mos1} L. Mosher, {\em Dynamical systems and the homology norm of a $3$-manifold I. Efficient intersection of surfaces and flows}, Duke Math. Jour. {\bf 65} (1992) 449-500.

\bibitem[Mos2]{Mos2} L. Mosher, {\em Laminations and flows transverse to finite depth foliations, Part I: Branched surfaces and dynamics}, available from http://newark.rutgers.edu:80/mosher/, Part II in preparation.


\bibitem[Ng]{Ng} H. T. Nguyen, {\em Distortion of surfaces in $3$-manifolds}, J. of Topol. {\bf 12} (2019) 1115-1145.

\bibitem[No]{No} S.P.Novikov, {\em Topology of foliations}, Trans. Moscow Math. Soc., {\bf 14} (1963) 268-305.


\bibitem[Th1]{Th1}  W. Thurston, {\em The geometry and topology of 3-manifolds},
	Princeton University Lecture Notes, 1982.

\bibitem[Th2]{Th2}  W. Thurston, {\em Three dimensional manifolds, Kleinian groups,
	and hyperbolic geometry}, Bull. A.M.S. {\bf 6} (1982) 357-381.

\bibitem[Th3]{Th3}  W. Thurston, {\em A norm for the homology of
three manifolds}, Mem. Amer. Math. Soc. {\bf 59} (1986) No. 339, pp. 99-130.

\bibitem[Th4]{Th4}  W. Thurston, {\em On the geometry and dynamics of diffeomorphisms of surfaces}, Bull. Amer. Math. Soc. {\bf 19} (1998) 417-431.
	
\bibitem[Th5]{Th5}  W. Thurston, {\em Three manifolds, foliations and circles I} arxiv.math.9712268.

\bibitem[Th6]{Th6} W. Thurston, {\em Three manifolds, foliations and circles, II. Transverse asymptotic geometry of foliations}, preprint.

\bibitem[Tic]{Tic} D. Tischler, {\em On fibering certain foliated manifolds over $\mathbb S^1$}, Topology {\bf 9} (1970) 153-154.

\end{thebibliography}
\end{document}